\newcommand{\R}{\mathbb{R}}
\newcommand{\N}{\mathbb{N}}
\newcommand{\eps}{\varepsilon}
\newcommand{\pv}{\mathrm{p.v.}\!\!}
\newcommand{\Ds}{{\left(-\lapl\right)}^s}
\newcommand{\lapl}{\Delta}
\newcommand{\1}{\mathbf{1}}
\newcommand{\hf}{\,{}_2F_1}
\newcommand{\cH}{{\mathcal H}}
\renewcommand{\epsilon}{\varepsilon}
\newtheorem{theorem}{Theorem}[section]
\newtheorem{lemma}[theorem]{Lemma}
\newtheorem{proposition}[theorem]{Proposition}
\theoremstyle{remark}
\newtheorem{remark}[theorem]{Remark}
\theoremstyle{definition}
\numberwithin{equation}{section}
\title{On the shape of the first fractional eigenfunction}
\date{\today}
\author{Nicola Abatangelo}
\address{(N. Abatangelo) Dipartimento di Matematica, Alma Mater Studiorum Università di Bologna, P.zza di Porta S. Donato 5, 40126 Bologna, Italy.}
\email{nicola.abatangelo@unibo.it}
\author{Sven Jarohs}
\address{(S. Jarohs) Institut für Mathematik, Goethe-Universität Frankfurt am Main, Robert-Meyer-Str.~10, 60325 Frankfurt am Main, Germany.}
\email{jarohs@math.uni-frankfurt.de}
\thanks{{\it MSC2020: Primary:} 
47A75, 
35B99, 
35P05; 
{\it Secondary:} 
47G20, 
35R11. 
}
\thanks{{\it Acknowledgements}: The first author has been partially supported by the Alexander von Humboldt Foundation.}
\begin{document}

\begin{abstract}
We show that the first eigenfunction of the fractional Laplacian~$\Ds$,~$s\in(1/2,1)$, 
is superharmonic in the unitary ball up to dimension $11$. 
To this aim, we also rely on a computer-assisted step to estimate a rather complicated constant depending on the dimension and the power $s$.
\end{abstract}

\maketitle

\section{Introduction}

The fractional Laplace operator is an integro-differential nonlocal operator of non-integer order. 
It is defined as
\begin{align*}
\Ds u(x) & := \frac{4^s\Gamma(n/2+s)}{\pi^{n/2}\big|\Gamma(-s)\big|}
\;\pv\int_{\R^n}\frac{u(x)-u(y)}{{|x-y|}^{n+2s}}\;dy 
&  s\in(0,1),\ x\in\R^n,
\end{align*}
where ``$\pv$'' means that the integral is taken in the principal value sense.
We refer to \cites{hitchhiker,bucur-valdinoci,av,garofalo} for all its basic features. 

Here, we recall that it is naturally related to the fractional Sobolev space
\begin{align*}
H^s(\R^n)=\left\{u\in L^2(\R^n):[u]^2_{s}:=\int_{\R^n}\int_{\R^n}\frac{{|u(x)-u(y)|}^2}{{|x-y|}^{n+2s}}\;dy\;dx<\infty\right\}
\end{align*}
and, when the attention is restricted to a bounded domain $\Omega\subset\R^n$, to the space
\begin{align*}
\cH^s_0(\Omega)=\big\{u\in H^s(\R^n):u=0 \text{ in }\R^n\setminus\Omega\big\},
\end{align*} 
which is encoding a natural notion of homogeneous boundary conditions in $\R^n\setminus\Omega$:
for this reason it is sometimes also known as the \textit{restricted} fractional Laplacian.
From a functional analytic perspective, $\Ds$ is a positive self-adjoint operator on $\{u\in L^2(\R^n):u=0 \text{ in }\R^n\}$ with compact inverse.
It has therefore a discrete spectrum and the eigenvalues have finite multiplicity. In particular, the first eigenvalue, which we denote by $\lambda=\lambda(\Omega)$, is simple. It is known that the first eigenfunction $\phi \in \cH^s_0(\Omega)$ is smooth inside $\Omega$ and that it can be chosen to be strictly positive.

In this paper we partially answer a conjecture raised by Ba\~nuelos, Kulczycki, and M\'endez-Hern\'andez \cite{MR2217951}*{Conjecture 1.1}:
\begin{equation}\label{conjecture}
\text{If $n=1$ and $\Omega=(-1,1)$, then $\phi$ is concave in its support.}
\end{equation}
This has been previously established by Ba\~nuelos and Kulczycki \cite{MR2056835}*{Theorem 4.7}, for $s=\frac12$, and
by Ka\ss mann and Silvestre \cite{ks} and Ba\~nuelos and DeBlassie~\cite{MR3306696}*{Theorem 1.1}, whenever $s^{-1}\in\N$;
moreover, in general dimension and for a general bounded Lipschitz domain, \cite{MR3306696}*{Theorem 1.1} also shows that $\phi$ is superharmonic, again under the assumption $s^{-1}\in\N$.
Another related result is contained in~\cite{MR2217951}*{Theorem 1.1}, which states that $\phi$ is \textit{mid-concave} (see \cite{MR2217951}*{Definition 1.1}) on rectangles $\Omega=(-a_1,a_1)\times\cdots\times(-a_n,a_n)\subset\R^n$. 

Here, we give a computer-aided proof of \eqref{conjecture} for any $s\in(\frac12,1)$. 
\begin{theorem}\label{main theorem}
Let $s\in(\frac12,1)$ and $n=1$. Let $\phi\in \cH^s_0\big((-1,1)\big)$ denote the first eigenfunction of $\Ds$ on the interval $(-1,1)$. Then
\begin{align*}
\phi''<0\qquad\text{in }(-1,1).
\end{align*}
\end{theorem}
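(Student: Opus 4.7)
The plan is to convert the concavity $\phi''<0$ into the pointwise positivity of a fractional Laplacian of smaller order applied to $\phi$. Because $s\in(\tfrac12,1)$, the complementary exponent $1-s$ lies in $(0,\tfrac12)$; applying $(-\Delta)^{1-s}$ to the eigenvalue equation and invoking the pointwise composition rule $(-\Delta)^{1-s}(-\Delta)^s=-\Delta$ (legitimate for $\phi$ thanks to its compact support and interior smoothness) yields, for $x\in(-1,1)$,
\begin{equation*}
-\phi''(x) \;=\; \lambda\,(-\Delta)^{1-s}\phi(x) + R(x), \qquad R(x)=\kappa\!\int_{\R\setminus(-1,1)}\!\frac{\bigl|(-\Delta)^s\phi(y)\bigr|}{|x-y|^{3-2s}}\,dy \;>\;0,
\end{equation*}
where $\kappa>0$ is the normalising constant. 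The remainder $R$ arises because $(-\Delta)^s\phi$ is strictly negative outside $(-1,1)$, although $\phi$ itself vanishes there. Consequently it is enough to prove the slightly weaker statement $\lambda\,(-\Delta)^{1-s}\phi+R>0$ on $(-1,1)$, and one may even tolerate some negativity of $(-\Delta)^{1-s}\phi$ provided $R$ compensates for it.

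To analyse $(-\Delta)^{1-s}\phi$, I would expand the singular integral, isolate the non-negative exterior contribution $\kappa\,\phi(x)\!\int_{\R\setminus(-1,1)}|x-y|^{2s-3}\,dy$, and be left with a signed principal-value term on $(-1,1)$. This term is controlled by comparing $\phi$ with the Getoor torsion profile $\Phi_0(x)=(1-x^2)_+^s$, for which $(-\Delta)^s\Phi_0\equiv C_s>0$ on $(-1,1)$ and for which both $(-\Delta)^{1-s}\Phi_0$ and the boundary integral analogous to $R$ admit closed expressions in terms of $\hf$ in $x^2$. Sharp two-sided bounds $c_1\Phi_0\le\phi\le c_2\Phi_0$---furnished either by the explicit Green function of $(-\Delta)^s$ on $(-1,1)$ or by Hopf-type boundary regularity combined with the asymptotics $\phi(x)\sim c(1-|x|)^s$---let one estimate $|(-\Delta)^{1-s}(\phi-\tilde c\,\Phi_0)|$ by an explicit quantity $E(x,s)$, reducing the whole matter to a single pointwise inequality
\begin{equation*}
\tilde c\,(-\Delta)^{1-s}\Phi_0(x) + R(x) \;>\; E(x,s), \qquad x\in(-1,1),\ s\in(\tfrac12,1).
\end{equation*}

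The main obstacle is this concluding inequality: every ingredient is a non-elementary function of $x^2$ and $s$, so I do not expect a clean analytic attack. This is the ``rather complicated constant'' foreshadowed in the abstract, and I would handle it by partitioning the strip $(-1,1)\times(\tfrac12,1)$ into small rectangles, bounding each ingredient on each cell via interval arithmetic or parameter monotonicity, and then upgrading the cell-by-cell verification to a uniform statement. A minor ancillary remark is that $\phi(x)\sim c(1-|x|)^s$ near the endpoints forces $\phi''(x)\sim cs(s-1)(1-|x|)^{s-2}\to-\infty$, so $\phi''<0$ is automatic in a neighbourhood of $\pm 1$ and the computer-assisted step may be confined to a compact subinterval $[-1+\delta,1-\delta]$ for some small fixed $\delta>0$.
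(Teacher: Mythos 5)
Your opening step coincides with the paper's: applying $(-\Delta)^{1-s}$ to the eigenvalue equation, using $(-\Delta)^{1-s}(-\Delta)^s=-\Delta$, and exploiting that $(-\Delta)^s\phi<0$ outside $(-1,1)$ to produce a positive remainder $R$. (Be aware that $(-\Delta)^s\phi$ blows up like $\mathrm{dist}(y,\partial B_1)^{-s}$ as $y$ approaches $\pm1$ from outside, so ``compact support and interior smoothness'' is not quite enough to justify the composition; the paper does this carefully through the Riesz-potential identity $(-\Delta)^s\phi=\lambda\phi-\1_{B_1^c}(F_{-s}\ast\phi)$ and the Green/Poisson representation.)

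The genuine gap is in how you propose to handle the term $\lambda(-\Delta)^{1-s}\phi$. Your plan rests on explicit two-sided bounds $c_1\Phi_0\le\phi\le c_2\Phi_0$ against the torsion profile $\Phi_0=(1-x^2)_+^s$, with the error $E(x,s)$ controlled by $c_2-c_1$. But Hopf-type and boundary-regularity arguments give only non-explicit constants, and the Green-function route is circular: $\phi=\lambda\,\mathbb{G}_s[\phi]$ requires bounds on $\phi$ to produce bounds on $\phi$, and any non-circular iteration yields a ratio $c_2/c_1$ far from $1$. Since $E(x,s)$ is of the same order as the main term $\tilde c\,(-\Delta)^{1-s}\Phi_0+R$ multiplied by $c_2/c_1-1$, there is no reason the concluding inequality should survive, and you give no mechanism to make it survive. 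The paper avoids this entirely by iterating the splitting once more, writing $(-\Delta)^{1-s}\phi=F_{2s-1}\ast(-\Delta)^s\phi$ and then converting $\1_{B_1^c}(F_{-s}\ast\phi)$ into $\lambda\,\mathbf{P}_s[\phi]$ via the Poisson kernel: the entire expression for $-\Delta\phi(x)$ becomes $\lambda\int_{B_1}\phi(y)K(x,y)\,dy$ for an explicit kernel $K$, so only the \emph{positivity} of $\phi$ is used and no quantitative comparison is ever needed; the only spectral input is an explicit upper bound $\lambda\le\Lambda(1,s)$, which enters with the right sign. Your boundary observation (that $\phi''\to-\infty$ near $\pm1$, confining the numerical check to a compact subinterval) is a reasonable supplementary remark, but it does not repair the interior argument. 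To make your route work you would need either the paper's second splitting or some genuinely new way to bound $(-\Delta)^{1-s}\phi$ from below without sharp pointwise control of $\phi$.
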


More generally, our approach is able to reach the following.  

\begin{theorem}\label{main theorem 2}
Let $s\in(\frac12,1)$, $2\leq n\leq 11$, and $B_1\subset \R^n$ denote the unitary ball. Let $\phi\in \cH^s_0(B_1)$ denote the first eigenfunction of $\Ds$ on $B_1$. Then
\begin{align}\label{conjecture n}
-\Delta \phi>0\qquad\text{in }B_1.
\end{align}
\end{theorem}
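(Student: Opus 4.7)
The plan is to leverage the identity
\[
-\Delta\phi(x) \;=\; \lambda\,(-\Delta)^{1-s}\phi(x),\qquad x\in B_1,
\]
which is available precisely because $s>\tfrac12$, so that $(-\Delta)^{1-s}$ is a fractional operator of order $2(1-s)\in(0,1)$ and can be applied pointwise to $\phi$, which is smooth in $B_1$ and lies in $\cH^s_0(B_1)$. Writing out the singular integral and using that $\phi\equiv 0$ outside $B_1$ one splits the right hand side into an interior and an exterior contribution:
\[
-\Delta\phi(x) \;=\; \lambda\,c_{n,1-s}\left[\mathrm{p.v.}\!\int_{B_1}\frac{\phi(x)-\phi(y)}{|x-y|^{n+2-2s}}\,dy \;+\; \phi(x)\,J(x)\right],
\]
where $J(x):=\int_{\R^n\setminus B_1}|x-y|^{-n-2+2s}\,dy$ is an explicit, positive, radial function. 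The exterior term is the favourable contribution; the interior term can be negative wherever $\phi(y)>\phi(x)$, and the core of the proof is to show that the first dominates the second.

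Next I would exploit radial symmetry: the first eigenfunction is radial and radially decreasing (by moving planes), so writing $\phi(x)=\Phi(|x|)$ and integrating the kernel in the angular variable reduces the question to a one-dimensional inequality in $r=|x|$ involving an explicit reproducing kernel $K_{n,s}(r,\rho)$ and the unknown profile $\Phi$. To make this effective I would substitute sharp two-sided bounds for $\Phi$ in terms of the torsion profile $(1-r^2)^s$: the torsion function $u_s(x)=C_{n,s}(1-|x|^2)^s_+$ satisfies $(-\Delta)^s u_s\equiv 1$ on $B_1$ by Getoor's formula, its image under $(-\Delta)^{1-s}$ can be computed in closed form via hypergeometric identities (and similarly for all powers $(1-|x|^2)^{s+k}$ that appear in a Jacobi-type expansion of $\phi$), while the quotient $\phi/u_s$ is pinched between two explicit constants in terms of $\lambda(n,s)$. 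Plugging these in reduces positivity of $-\Delta\phi(x)$ to positivity of a single explicit function $F_{n,s}(r)$.

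The last step is then the quantitative verification that $F_{n,s}(r)>0$ for every $r\in[0,1)$, uniformly over $s\in(\tfrac12,1)$, for each $n\in\{2,\dots,11\}$; this is where the computer-assisted component enters, since $F_{n,s}$ is built out of incomplete Beta functions, hypergeometric constants and an explicit upper bound on $\lambda(n,s)$, so rigorous interval arithmetic is needed to certify its sign. The main obstacle I anticipate is precisely in keeping the estimate sharp enough to survive: the competing singular contributions of the interior integral and of the exterior term as $r\to 1^-$ are delicately balanced, and the ratio deteriorates with the dimension, which is what is expected to force the cutoff at $n=11$.
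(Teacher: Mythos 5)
Your opening identity $-\Delta\phi=\lambda\,(-\Delta)^{1-s}\phi$ in $B_1$ is false, and the error sits exactly at the crux of the problem. The semigroup property gives $-\Delta\phi=(-\Delta)^{1-s}\big[(-\Delta)^s\phi\big]$, but the eigenvalue equation $(-\Delta)^s\phi=\lambda\phi$ holds only \emph{inside} $B_1$: outside, $\phi\equiv 0$ while $(-\Delta)^s\phi$ is a nontrivial negative function, namely $-\1_{B_1^c}(F_{-s}\ast\phi)$. Since $(-\Delta)^{1-s}$ is nonlocal, these exterior values contribute when you evaluate at $x\in B_1$, and the correct identity is
\[
-\Delta\phi \;=\; \lambda\,(-\Delta)^{1-s}\phi \;+\; F_{s-1}\ast\big[\1_{B_1^c}(F_{-s}\ast\phi)\big]
\qquad\text{in }B_1,
\]
where the second term is strictly positive. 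Your formula therefore only gives the one-sided bound $-\Delta\phi\ge\lambda\,(-\Delta)^{1-s}\phi$, and your program reduces the theorem to the \emph{stronger} claim that $(-\Delta)^{1-s}\phi>0$ in $B_1$, i.e.\ that $\phi$ is superharmonic also for the order $2(1-s)$ --- a statement of exactly the same nature and difficulty as the one to be proved. Worse, the analysis shows this sufficient condition cannot be certified by the kind of estimates you propose: after reducing to kernels, the contribution of $\lambda\,(-\Delta)^{1-s}\phi$ is controlled from below by $J_{2s-1}(xe_1;0)-J_{2s-1}(xe_1;e_1)$, which is nonnegative only for $|x|\le x_*(n,s)<1$ and turns negative as $|x|\to1$; for $|x|>x_*(n,s)$ positivity is restored precisely by the exterior term you discarded. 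So dropping it loses the argument in the boundary region, which you yourself identify as the delicate one.

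A secondary, repairable weakness: pinching $\Phi$ between explicit multiples of the torsion profile $(1-r^2)^s$ introduces a multiplicative loss that can be avoided entirely. Once $-\Delta\phi(x)$ is written as $\lambda\int_{B_1}\phi(y)\,K(x,y)\,dy$ for an explicit kernel $K$ built from $F_{2s-1}$, $F_{s-1}$ and the Poisson kernel $P_s$, only the pointwise sign of $K$ and the nonnegativity of $\phi$ are needed, together with an explicit upper bound for $\lambda$ (obtained by testing the Rayleigh quotient with the torsion function); no two-sided bound on $\phi$ itself enters. If you keep the exterior term in the first step, the remainder of your outline --- angular reduction by radial symmetry, hypergeometric evaluation of the resulting one-dimensional integrals, and a computer-assisted sign check of an explicit function of $(s,x)$ for each $n\le 11$ --- does match the structure of a correct proof.
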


We believe the threshold $n\leq 11$ to be merely technical and due to a few sub-optimal estimates involved in our analysis.

Our strategy begins with a purely analytic approach to reduce $\Delta\phi$ in an integral form. 
At the core of this strategy we exploit the semigroup property of $\Ds$ in that we split
\begin{align}
-\lapl\phi &={(-\lapl)}^{1-s}\Ds\phi =
{(-\lapl)}^{1-s}\big(\lambda\phi+\1_{\R^n\setminus B_1}\Ds\phi\big) \nonumber \\
& =
\lambda{(-\lapl)}^{1-2s}\big(\lambda\phi+\1_{\R^n\setminus B_1}\Ds\phi\big)+
{(-\lapl)}^{1-s}\big(\1_{\R^n\setminus B_1}\Ds\phi\big)
& \qquad\text{in }B_1. \label{splitted}
\end{align}
Next, we write suitable integral representations for the different terms that appear: these use in a crucial way our standing assumption $s>\frac12$, so that $(-\Delta)^{1-2s}$ stands for the convolution with the fundamental solution in $\R^n$. A central role in these formulas is played by the nonlocal Poisson kernel of $B_1$
\begin{align}\label{poisson}
P_s(x,y) := \frac{\gamma(n,s)}{{|x-y|}^n}\frac{\big(1-|x|^2\big)^s}{\big(|y|^2-1\big)^s}, 
\quad \gamma(n,s):=\frac{\Gamma(\frac{n}2)}{\pi^{n/2}\Gamma(s)\,\Gamma(1-s)},
\qquad x\in B_1,\ y\in \R^n\setminus B_1.
\end{align}
The splitting of $-\lapl \phi$ is performed in Section \ref{sec:set-up}. 
A refinement of \eqref{splitted} leads to write \eqref{conjecture n} as an integral inequality not involving directly $\phi$ or $\lambda$ (see \eqref{Q} below). 
At this point we split our analysis in three different cases:
\begin{itemize}
\item For $n=1$ and $s=\frac34$ the argument can be concluded by hand, without too much of a hustle: this is done in Section \ref{logarithmic case}.
\item For $n=1$ and $s\in(\frac12,1)\setminus\{\frac34\}$ the integral quantities can be simplified a lot via estimates from below; still, the resulting inequality contains a quite complicated expression in $s$ and we therefore plot and verify it using a computer: this is done in Section \ref{1 s};
\item For $n\geq 2$ a similar approach to the previous point is taken, with the important difference that in this case hypergeometric functions make their appearance in our study: these make the analysis even more complicated and we consequently need to rely even more on numerical evaluations: this is done in Section \ref{n s}.
\end{itemize}

Many details will be deferred to Appendices, in the attempt of not breaking the flow of the exposition with technicalities. Nevertheless, we would like to mention that in Appendix \ref{bound first eigenvalue} we derive an upper bound for the first eigenvalue $\lambda$ on $B_1$ for general dimension $n$ and power $s$, while in Appendix \ref{minimum at 0} we derive from a representation formula for certain $s$-harmonic functions some symmetry and monotonicity properties which are useful in our analysis and might be of independent interest.

\subsection{Notations}
We denote by $B_r$ the $n$-dimensional ball of radius $r>0$ centered at $0$. We set
\begin{align}\label{Ftau}
F_\tau(z) & := \left\lbrace\begin{aligned}
& \kappa(n,\tau)|z|^{2\tau-n},\quad \kappa(n,\tau):=\frac{\Gamma(\frac{n}2-\tau)}{4^\tau\pi^{n/2}\big|\Gamma(\tau)\big|}, && 
\tau\in\R,\ \tau-\frac{n}2\not\in\N_0,  \\
& (-1)^{1+\tau-n/2}\frac{2^{1-2\tau}\pi^{-n/2}}{\Gamma(1+\tau-\frac{n}2)\,\Gamma(\tau)}\,|z|^{2\tau-n}\,\ln|z|, && \tau\in\R,\ \tau-\frac{n}2\in\N_0,  
\end{aligned}\right.  & z\in\R^n\setminus\{0\}.
\end{align}
Note that $F_{\tau}$ is the fundamental solution of $(-\Delta)^{\tau}$ in $\R^n$ if $\tau>0$ and for $\tau\in(-1,0)$ it is the kernel of the fractional Laplacian of order $2\tau$. For a measurable set $A\subset\R^n$, $\1_A$ denotes the characteristic function of $A$ and $A^c=\R^n\setminus A$ the complementary set of $A$.

\section{Set-up of the proof of Theorems \texorpdfstring{\ref{main theorem}}{1.1} and  \texorpdfstring{\ref{main theorem 2}}{1.2}}
\label{sec:set-up}

\subsection{Representation of the Laplacian of the eigenfunction}
In this paragraph we perform the splitting of $-\lapl\phi$ announced in \eqref{splitted}.
\begin{lemma} For $s\in(0,1)$ it holds $\Ds\phi\in L^1(\R^n)$ and we have
\begin{align}\label{dsphi}
\Ds\phi=\lambda\phi-\1_{B_1^c}(F_{-s}\ast\phi) \qquad\text{in }\R^n\setminus \partial B_1.
\end{align}
\end{lemma}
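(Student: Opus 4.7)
The plan is a direct computation that splits into the two cases $x\in B_1$ and $x\in B_1^c\setminus\partial B_1$. For $x\in B_1$ the claimed identity reduces to the defining eigenvalue equation $\Ds\phi(x)=\lambda\phi(x)$, since $\1_{B_1^c}(x)=0$ kills the convolution term. For $x\in B_1^c\setminus\partial B_1$ the facts that $\phi(x)=0$ and $\supp\phi\subset\overline{B_1}$ turn the principal-value integral defining $\Ds\phi(x)$ into the ordinary integral
\begin{equation*}
\Ds\phi(x) \;=\; -\frac{4^s\Gamma(n/2+s)}{\pi^{n/2}|\Gamma(-s)|}\int_{B_1}\frac{\phi(y)}{|x-y|^{n+2s}}\,dy,
\end{equation*}
the singularity at $y=x$ being harmless since $x$ stays at positive distance from $\overline{\supp\phi}$. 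Matching this prefactor with $\kappa(n,-s)$ in \eqref{Ftau} (applicable because $-s-n/2\notin\N_0$ for any $s\in(0,1)$ and $n\geq 1$) identifies the right-hand side with $-(F_{-s}\ast\phi)(x)$, so the lemma follows because $\lambda\phi(x)=0$ there.

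For the $L^1$ bound, $\lambda\phi\in L^1(\R^n)$ is immediate from $\phi\in L^\infty$ with compact support. For the convolution contribution I would apply Fubini to write
\begin{equation*}
\int_{B_1^c}(F_{-s}\ast\phi)(x)\,dx \;=\; \int_{B_1}\phi(y)\int_{B_1^c}F_{-s}(x-y)\,dx\,dy,
\end{equation*}
use the elementary bound $|x-y|\geq d(y):=\dist(y,\partial B_1)$ valid for $x\in B_1^c$ to estimate the inner integral by $Cd(y)^{-2s}$, and then invoke the classical boundary decay $\phi(y)\leq C(1-|y|)^s$ of the first Dirichlet eigenfunction to reduce the outer integral to a multiple of $\int_{B_1}(1-|y|)^{-s}\,dy$, which converges because $s<1$.

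The main effort here is really bookkeeping rather than a deep argument: one has to verify that the normalization constant in the definition of $\Ds$ coincides exactly with $\kappa(n,-s)$ from \eqref{Ftau}, and then control the potentially singular behavior of $F_{-s}\ast\phi$ near $\partial B_1$ by exploiting the $s$-Hölder vanishing of the eigenfunction at the boundary.
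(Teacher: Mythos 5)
Your argument is correct, but it is genuinely different from the one in the paper. You verify the identity pointwise on each of the two components of $\R^n\setminus\partial B_1$ separately: inside $B_1$ it collapses to the interior eigenvalue equation $\Ds\phi=\lambda\phi$ (valid pointwise thanks to the interior smoothness of $\phi$), and outside $\overline{B_1}$ the principal value disappears because $\phi$ vanishes there and is supported in $\overline{B_1}$, so that $\Ds\phi(x)=-(F_{-s}\ast\phi)(x)$ once one checks that the normalizing constant of $\Ds$ equals $\kappa(n,-s)$ --- which it does. The paper instead proves the identity first in the distributional sense, testing against $\psi\in C^\infty_c(\R^n)$ and running the computation through the Green function $G_s$, the Poisson kernel $P_s$, and Landkof's identity $\int_{B_1^c}P_s(x,z)F_s(z-y)\,dz=F_s(x-y)$, and only then upgrades to a pointwise statement by citing Silvestre. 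Your route is shorter and more elementary, at the price of importing two external facts (the pointwise interior equation and, for the $L^1$ bound, the boundary decay $\phi\lesssim(1-|y|)^s$, which in any case also follows from comparison with the torsion function $u_1$ used in Appendix~B); note however that your Tonelli computation with $|x-y|\geq 1-|y|$ already gives $F_{-s}\ast\phi\in L^1(B_1^c)$ without the decay estimate if one only uses $\phi\in L^\infty$ more carefully near the boundary --- the decay is genuinely needed, since with $\phi$ merely bounded the bound $(1-|y|)^{-2s}$ would not be integrable for $s\geq 1/2$, so that step is essential and correctly identified. What the paper's longer detour buys is the auxiliary identity $(F_{-s}\ast\phi)(z)=\lambda\int_{B_1}P_s(y,z)\,\phi(y)\,dy$ for $z\in B_1^c$, which is reused in the proof of \eqref{laplacian2}; if you adopt your proof, that identity would have to be established separately.
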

\begin{proof}
Consider $\psi\in C^\infty_c(\R^n)$. Then
\begin{align*}
\int_{\R^n}\phi\,\Ds\psi & =\int_{B_1}\phi\,\Ds\mathbb{G}_s[\Ds\psi]
=\lambda\int_{B_1}\phi\,\mathbb{G}_s[\Ds\psi]
\end{align*}
Above, we have denoted by $\mathbb{G}_s:L^2(B_1)\to\cH^s_0(B_1)$ the solution map to the Dirichlet problem
\begin{align*}
\left\lbrace\begin{aligned}
\Ds u &= f && \text{in }B_1,\ f\in L^2(B_1), \\
u &\in \cH^s_0(B_1),
\end{aligned}\right.
\end{align*}
which admits a representation in terms of the Green function
\begin{align}
\mathbb{G}_s[f](x) &= \int_{B_1}G_s(x,y)\,f(y)\;dy && \text{for }x\in B_1, \nonumber \\
G_s(x,y) &= F_s(x-y)-\int_{B_1^c}P_s(x,z)\,F_s(z-y)\;dz
&& \text{for }x,y\in B_1, \nonumber \\
P_s(x,y) &= \int_{B_1}F_{-s}(z-y)\,G_s(x,z)\;dz
&& \text{for }x\in B_1,\ y\in B_1^c,\label{pois-green}
\end{align}
see \cite{MR3461641}*{Definition 1.9} and \cite{MR3393247}*{equation (25) and Theorem 1.2}. We have then
\begin{align*}
\frac1\lambda\int_{\R^n}\phi\,\Ds\psi &=
\int_{B_1}\phi(x)\int_{B_1}F_s(x-y)\,\Ds\psi(y)\;dy\;dx \\
&\quad -\int_{B_1}\phi(x)\int_{B_1^c}P_s(x,z)\int_{B_1}F_s(z-y)\,\Ds\psi(y)\;dy\;dz\;dx \\
&=
\int_{B_1}\phi\psi
-\int_{B_1}\phi(x)\int_{B_1^c}F_s(x-y)\,\Ds\psi(y)\;dy\;dx \\
&\quad -\int_{B_1}\phi(x)\int_{B_1^c}P_s(x,z)\,\psi(z)\;dz\;dx \\
&\quad +\int_{B_1}\phi(x)\int_{B_1^c}P_s(x,z)\int_{B_1^c}F_s(z-y)\,\Ds\psi(y)\;dy\;dz\;dx \\
&=
\int_{B_1}\phi\psi-\int_{B_1}\phi(x)\int_{B_1^c}P_s(x,z)\,\psi(z)\;dz\;dx \\
&\quad +\int_{B_1}\phi(x)\int_{B_1^c}\bigg(\int_{B_1^c}P_s(x,z)\,F_s(z-y)\;dz-F_s(x-y)\bigg)\Ds\psi(y)\;dy\;dx. 
\end{align*}
As it holds (see \cite{MR0350027}*{equation (1.6.12')})
\begin{align*}
\int_{B_1^c}P_s(x,z)F_s(z-y)\;dz=F_s(x-y)
\qquad\text{for }x\in B_1,\ y\in B_1^c,
\end{align*}
then
\begin{align}
\int_{\R^n}\phi\,\Ds\psi
& =\lambda\int_{B_1}\phi\,\psi-\lambda\int_{B_1}\phi(x)\int_{B_1^c}P_s(x,z)\,\psi(z)\;dz\;dx \nonumber \\
& =\lambda\int_{B_1}\phi\,\psi-\int_{B_1^c}\psi(z)\int_{B_1}P_s(x,z)\,\lambda\phi(x)\;dx\;dz \nonumber \\
& =\lambda\int_{B_1}\phi\,\psi-\int_{B_1^c}\psi\,(F_{-s}\ast\mathbb{G}_s[\lambda\phi]) 
  =\lambda\int_{B_1}\phi\,\psi-\int_{B_1^c}\psi\,(F_{-s}\ast\phi) \label{64872364}
\end{align}
where we have used \eqref{pois-green}.
The stated equality \eqref{dsphi} holds also pointwisely in view of \cite{MR2270163}*{Proposition 2.4}.
\end{proof}

\begin{proposition} For $s\in(\frac12,1)$ it holds
\begin{align}
-\Delta\phi &= \lambda^2F_{2s-1}\ast\phi+\big(F_{s-1}-\lambda F_{2s-1}\big)\ast\big[\1_{B_1^c}(F_{-s}\ast\phi)\big] & \label{laplacian1}\\
&= \lambda^2F_{2s-1}\ast\phi+\lambda\big(F_{s-1}-\lambda F_{2s-1}\big)\ast\mathbf{P}_s[\phi] & \text{in }B_1, \label{laplacian2}
\end{align}
where
\begin{align*}
\mathbf{P}_s[\phi](y)=\1_{B_1^c}(y)\int_{B_1}P_s(x,y)\,\phi(x)\;dx
\qquad\text{for }y\in\R^n.
\end{align*}
\end{proposition}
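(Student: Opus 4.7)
The plan is to start from the semigroup splitting \eqref{splitted} and, for $s\in(\tfrac12,1)$, reinterpret each of the operators $(-\Delta)^{1-2s}$ and $(-\Delta)^{1-s}$ as a convolution against one of the kernels in \eqref{Ftau}. Since $1-2s\in(-1,0)$, the operator $(-\Delta)^{1-2s}$ has negative order and is the Riesz potential of order $2(2s-1)$; a direct comparison with the Riesz normalisation shows that its convolution kernel is exactly $F_{2s-1}$, so that $(-\Delta)^{1-2s}f=F_{2s-1}\ast f$ for every $f$ with mild decay. On the other hand $(-\Delta)^{1-s}$ has positive order $1-s\in(0,\tfrac12)$, and when it is applied to a function $u$ that vanishes in a neighborhood of $x$ the principal value reduces to an absolutely convergent integral, yielding $(-\Delta)^{1-s}u(x)=-F_{s-1}\ast u(x)$ (after checking that $\kappa(n,s-1)$ is exactly the normalising constant of $(-\Delta)^{1-s}$).

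With these identifications, the first summand in \eqref{splitted} becomes $\lambda F_{2s-1}\ast\big(\lambda\phi+\1_{B_1^c}\Ds\phi\big)$. For the second summand, $u:=\1_{B_1^c}\Ds\phi$ is supported in $B_1^c$, hence at positive distance from any $x\in B_1$, so by the previous paragraph
\[
(-\Delta)^{1-s}\big(\1_{B_1^c}\Ds\phi\big)(x)=-F_{s-1}\ast\big(\1_{B_1^c}\Ds\phi\big)(x)\qquad\text{for }x\in B_1.
\]
Invoking \eqref{dsphi} on $B_1^c$ (where $\phi=0$) gives $\1_{B_1^c}\Ds\phi=-\1_{B_1^c}(F_{-s}\ast\phi)$; substituting this in both summands and gathering the convolutions that act on $\1_{B_1^c}(F_{-s}\ast\phi)$ yields \eqref{laplacian1}.

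To pass from \eqref{laplacian1} to \eqref{laplacian2}, it suffices to establish
\[
\1_{B_1^c}(y)\,(F_{-s}\ast\phi)(y)=\lambda\,\mathbf{P}_s[\phi](y)\qquad\text{for all }y\in\R^n.
\]
This identity is obtained by inserting $\phi=\mathbb{G}_s[\lambda\phi]$ under the convolution and applying Fubini: for $y\in B_1^c$,
\[
(F_{-s}\ast\phi)(y)=\lambda\int_{B_1}\phi(w)\int_{B_1}F_{-s}(y-z)\,G_s(z,w)\,dz\,dw,
\]
and, using the evenness of $F_{-s}$ together with the symmetry $G_s(z,w)=G_s(w,z)$, the inner integral is precisely $P_s(w,y)$ by the third line of \eqref{pois-green}.

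The delicate point throughout is the book-keeping of signs and normalising constants when switching between the positive- and negative-order conventions of $(-\Delta)^\tau$: $F_{2s-1}$ enters as the convolution kernel \emph{inverting} $(-\Delta)^{2s-1}$, whereas $F_{s-1}$ appears with a minus sign as the singular integral kernel of $(-\Delta)^{1-s}$ acting on a function that vanishes at the base point. The remaining exchanges of integrals and the reduction of the principal values are justified by the compact support of $\phi$, the polynomial decay of $F_{-s}\ast\phi$ at infinity, and the local integrability of the Riesz kernel $F_{2s-1}$.
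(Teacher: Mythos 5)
Your argument is correct and follows essentially the same route as the paper's proof: apply $(-\Delta)^{1-s}$ to the identity \eqref{dsphi}, identify $(-\Delta)^{1-2s}$ with convolution against $F_{2s-1}$ and $(-\Delta)^{1-s}$ acting on a function supported in $B_1^c$ with $-F_{s-1}\ast(\cdot)$ inside $B_1$, and then convert $\1_{B_1^c}(F_{-s}\ast\phi)$ into $\lambda\,\mathbf{P}_s[\phi]$ via $\phi=\mathbb{G}_s[\lambda\phi]$ and the kernel identity \eqref{pois-green}. The sign and normalisation checks you flag are exactly the ones that matter, and they come out right.
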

\begin{proof}
Starting from the last lemma, we compute (all equalities hold only in $B_1$)
\begin{align*}
-\Delta\phi={(-\Delta)}^{1-s}\Ds\phi={(-\Delta)}^{1-s}\big[\lambda\phi-\1_{B_1^c}(F_{-s}\ast\phi)\big]
=\lambda{(-\Delta)}^{1-s}\phi+F_{s-1}\ast[\1_{B_1^c}(F_{-s}\ast\phi)].
\end{align*}
Note that
\begin{align*}
{(-\Delta)}^{1-s}\phi=F_{2s-1}\ast\Ds\phi=F_{2s-1}\ast\big[\lambda\phi-\1_{B_1^c}(F_{-s}\ast\phi)\big]
\end{align*}
thus \eqref{laplacian1} follows.
In expanded form, \eqref{laplacian1} reads, for $x\in B_1$,
\begin{align*}
-\Delta\phi(x)
& =\lambda^2\int_{B_1}F_{2s-1}(x-y)\,\phi(y)\;dy  \\
& \qquad +\int_{B_1^c}\big(F_{s-1}(x-z)-\lambda F_{2s-1}(x-z)\big)\int_{B_1}F_{-s}(z-y)\,\phi(y)\;dy\;dz \\
& =\int_{B_1}\phi(y)\bigg[\lambda^2F_{2s-1}(x-y)+\int_{B_1^c}\big(F_{s-1}(x-z)-\lambda F_{2s-1}(x-z)\big)F_{-s}(z-y)\;dz\bigg]\;dy.
\end{align*}
Also, identity
\begin{align*}
(F_{-s}\ast\phi)(z)=\lambda\int_{B_1}P_s(y,z)\,\phi(y)\;dy
\qquad\text{for }z\in B_1^c,
\end{align*}
holds (we have already used this one in \eqref{64872364} exploiting \eqref{pois-green}). 
In expanded form, \eqref{laplacian2} reads
\begin{align*}
-\Delta\phi(x)
& =\lambda^2\int_{B_1}F_{2s-1}(x-y)\,\phi(y)\;dy  \\
& \qquad +\lambda\int_{B_1^c}\big(F_{s-1}(x-z)-\lambda F_{2s-1}(x-z)\big)\int_{B_1}P_s(y,z)\,\phi(y)\;dy\;dz \\
& =\lambda\int_{B_1}\phi(y)\bigg[\lambda F_{2s-1}(x-y)+\int_{B_1^c}P_s(y,z)\big(F_{s-1}(x-z)-\lambda F_{2s-1}(x-z)\big)\;dz\bigg]\;dy.
\end{align*}
\end{proof}

We know that, by uniqueness, $\phi$ and $-\Delta\phi$ are radial, so that for any $x\in B_1$ fixed
\begin{align*}
-\Delta\phi(x)=-\frac1{\big|\partial B_{|x|}\big|}\int_{\partial B_{|x|}}\Delta\phi(\theta)\;d\theta.
\end{align*}
Keeping this in mind, we define
\begin{align}\label{defi:j-tau}
J_\tau(x;y) := 
\left\lbrace\begin{aligned}
& \frac1{\big|\partial B_{|x|}\big|}\int_{\partial B_{|x|}}F_\tau(\theta-y)\;d\theta
& & \text{for }n\geq 2,\,x,y\in\R^n,\,x\neq y  \\
& \frac{F_\tau(x-y)+F_\tau(x+y)}{2} 
& & \text{for }n=1,\,x,y\in\R,\,x\neq y .
\end{aligned}\right.
\end{align}
Using \eqref{laplacian2}, we then write
\begin{align*}
-\Delta\phi(x)
& =\lambda\int_{B_1}\phi(y)\bigg[\lambda J_{2s-1}(x;y)+\int_{B_1^c}P_s(y,z)\big(J_{s-1}(x;z)-\lambda J_{2s-1}(x;z)\big)\;dz\bigg]\;dy,
\quad x\in B_1.
\end{align*}
Since
\begin{multline*}
\lambda J_{2s-1}(x;y)+\int_{B_1^c}P_s(y,z)\big(J_{s-1}(x;z)-\lambda J_{2s-1}(x;z)\big)\;dz=\\
=\lambda \bigg( J_{2s-1}(x;y)-\int_{B_1^c}P_s(y,z) \, J_{2s-1}(x;z)\;dz\bigg)+\int_{B_1^c}P_s(y,z) \, J_{s-1}(x;z)\;dz,
\end{multline*}
the positivity of $-\Delta\phi$ follows once we show
\begin{align*}
\lambda \bigg( J_{2s-1}(x;y)-\int_{B_1^c}P_s(y,z) \, J_{2s-1}(x;z)\;dz\bigg)+\int_{B_1^c}P_s(y,z) \, J_{s-1}(x;z)\;dz&\geq 0,
\qquad x,y\in B_1.
\end{align*}
As the second addend is clearly positive in the above inequality, we may replace $\lambda$ with a larger constant (see Appendix \ref{bound first eigenvalue}, equation \eqref{new bound on lambda})
\begin{align}\label{bigger lambda}
\lambda\leq \Lambda(n,s):=\frac{4^s\Gamma(1+s)^2\,\Gamma(1+2s+\frac{n}{2})}{(s+\frac{n}{2})\,\Gamma(\frac{n}2)\,\Gamma(1+2s)}
\end{align}
and it is then enough to show
\begin{multline}\label{Q}
J_{2s-1}(x;y)-\int_{B_1^c}P_s(y,z) \, J_{2s-1}(x;z)\;dz+\frac1{\Lambda(n,s)}\int_{B_1^c}P_s(y,z) \, J_{s-1}(x;z)\;dz\geq 0, \\
s\in\Big(\frac12,1\Big),\ x,y\in B_1.
\end{multline}

Note that $s-1-\frac{n}{2}<0$ for all $n\in\N$ and $s\in(\frac12,1)$, but $2s-1-\frac{n}{2}\in \N_0$ if and only if $s=\frac{3}{4}$ and $n=1$. Hence, this case differs strongly from the other cases as an effect of definitions \eqref{Ftau} and \eqref{defi:j-tau}. 
We begin with some general estimates to simplify \eqref{Q}.

\subsection{Reformulation of \texorpdfstring{\eqref{Q}}{(Q)}}
The first step is to note that the left-hand side of \eqref{Q} actually depends only on $|x|$ and $|y|$.
\begin{lemma}
For any $n\in \N$, $\tau\in \R$, and $s\in(0,1)$ we have
\begin{align*}
J_{\tau}(x;y)&=J_{\tau}(|x|e_1,|y|e_1)
&& x,y\in \R^n,\, x\neq y,\\
\int_{B_1^c}P_s(y,z)J_{\tau}(x;z)\ dz&=\int_{B_1^c}P_s(|y|e_1,z)\,J_{\tau}(|x|e_1;z)\;dz
&& x,y\in B_1,\, x\neq y.
\end{align*}
\end{lemma}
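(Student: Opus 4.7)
The plan is to establish both identities by exploiting the rotational (or reflectional, when $n=1$) symmetries of the ingredients $F_\tau$ and $P_s$, and the fact that the sphere $\partial B_{|x|}$ and the complement $B_1^c$ are invariant under orthogonal transformations.

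\textbf{Step 1: radiality of $F_\tau$ and separate dependence of $J_\tau(x;y)$ on $|x|$ and $|y|$.} From definition \eqref{Ftau}, $F_\tau(z)$ is a function only of $|z|$ in both branches. For $n\geq 2$ the sphere $\partial B_{|x|}$ and its surface measure depend on $x$ only through $|x|$, so the first slot of $J_\tau(x;y)$ factors through $|x|$ automatically. For the second slot, pick any $R\in O(n)$ and perform the change of variable $\theta=R\tilde\theta$ in the spherical integral: since $|R\tilde\theta|=|\tilde\theta|$ the domain is unchanged, and by radiality of $F_\tau$
\begin{align*}
F_\tau(R\tilde\theta-Ry)=F_\tau\bigl(R(\tilde\theta-y)\bigr)=F_\tau(\tilde\theta-y).
\end{align*}
This yields $J_\tau(x;y)=J_\tau(x;R^{-1}y)$ for every rotation, hence $J_\tau$ depends on $y$ only through $|y|$. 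Choosing $R$ so that $R^{-1}y=|y|e_1$ then gives the first identity for $n\geq 2$.

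\textbf{Step 2: the $n=1$ case of the first identity.} Since $F_\tau(\pm t)=F_\tau(|t|)$, one simply observes that the unordered pair $\{|x-y|,|x+y|\}$ equals $\{|\,|x|-|y|\,|,\,|x|+|y|\}$ (by splitting on the sign of $xy$). Therefore
\begin{align*}
\frac{F_\tau(x-y)+F_\tau(x+y)}{2}=\frac{F_\tau(|x|-|y|)+F_\tau(|x|+|y|)}{2}=J_\tau(|x|e_1,|y|e_1).
\end{align*}

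\textbf{Step 3: the integral identity.} Using Step 1 (and Step 2) we may rewrite the integrand as $P_s(y,z)\,J_\tau(|x|e_1;|z|e_1)$. For $n\geq 2$, pick a rotation $R$ with $R^{-1}y=|y|e_1$ and substitute $z=R\tilde z$. The domain $B_1^c$ and the measure are preserved, $|z|=|\tilde z|$, and
\begin{align*}
P_s(y,R\tilde z)=\frac{\gamma(n,s)}{|R^{-1}y-\tilde z|^{n}}\,\frac{(1-|y|^{2})^s}{(|\tilde z|^{2}-1)^s}=P_s(|y|e_1,\tilde z)
\end{align*}
by the very form of $P_s$ in \eqref{poisson}. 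This immediately yields the second identity. For $n=1$, a rotation is just $\pm 1$; replacing $z$ by $-z$ (which preserves $B_1^c$) together with $J_\tau(x;-z)=J_\tau(x;z)$ from Step 2 shows that the integral is symmetric under $y\mapsto -y$, hence depends only on $|y|$, and we conclude exactly as above.

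Since all steps reduce to transparent changes of variable supported by the radial structure of $F_\tau$ and $P_s$, I do not foresee any real obstacle; the only point requiring care is the slightly exceptional $n=1$ setting, where the discrete nature of $O(1)=\{\pm 1\}$ forces a direct parity argument rather than a continuous rotation.
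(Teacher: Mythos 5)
Your proposal is correct and follows essentially the same route as the paper: both establish the identities by a rotation (change of variables) on the sphere $\partial B_{|x|}$ and on $B_1^c$, using the radiality of $F_\tau$ and the explicit form of $P_s$, with the $n=1$ case handled by the evenness of $F_\tau$. Your parity argument for $n=1$ is in fact slightly more explicit than the paper's, which simply declares that case obvious.
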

\begin{proof}
First note that $F_{\tau}(x)=F_{\tau}(|x|e_1)$ for any $x\in \R^n\setminus\{0\}$ by definition \eqref{Ftau}. To see the statement for $J_{\tau}$, note that this is obviously true for $n=1$ from definition \eqref{defi:j-tau} and for $n>1$ we have by a rotation for $x\in B_1$, $y\in \R^n\setminus\{x\}$
\begin{align*}
& J_{\tau}(x;y)=\frac{1}{|\partial B_{|x|}|}\int_{\partial B_{|x|}}F_{\tau}\big(|\theta-y|e_1\big)\; d\theta=\frac{1}{|\partial B_{|x|}|}\int_{\partial B_{|x|}}F_{\tau}\Big(\sqrt{1-2\theta\cdot y+|y|^2}\,e_1\Big)\;d\theta\\
&=\frac{1}{|\partial B_{|x|}|}\int_{\partial B_{|x|}}F_{\tau}\Big(\sqrt{1-2|y|\nu_1+|y|^2}e_1)\; d\nu=\frac{1}{|\partial B_{|x|}|}\int_{\partial B_{|x|}}F_{\tau}\big(\nu-|y|e_1\big)\;d\nu=J_{\tau}(|x|e_1;|y|e_1).
\end{align*}
Similarly, we have with a rotation
\begin{align*}
\int_{B_1^c}P_s(y,z)\,J_{\tau}(x;z)\;dz&=\frac{\Gamma(\frac{n}{2})}{\pi^{\frac{n}{2}}\Gamma(s)\Gamma(1-s)}\int_{B_1^c}\frac{(1-|y|^2)^s}{(|z|^2-1)^s|y-z|^n}\,J_{\tau}(|x|e_1;|z|e_1)\;dz\\
&=\frac{\Gamma(\frac{n}{2})}{\pi^{\frac{n}{2}}\Gamma(s)\Gamma(1-s)}\int_{B_1^c}\frac{(1-|y|^2)^s}{(|z|^2-1)^s\big(|y|^2-2y\cdot z+|z|^2\big)^{\frac{n}{2}}}\,J_{\tau}(|x|e_1;|z|e_1)\;dz\\
&=\frac{\Gamma(\frac{n}{2})}{\pi^{\frac{n}{2}}\Gamma(s)\Gamma(1-s)}\int_{B_1^c}\frac{(1-|y|^2)^s}{(|v|^2-1)^s\big(|y|^2-2|y|v_1+|v|^2\big)^{\frac{n}{2}}}\,J_{\tau}(|x|e_1;|v|e_1)\;dv\\
&=\frac{\Gamma(\frac{n}{2})}{\pi^{\frac{n}{2}}\Gamma(s)\Gamma(1-s)}\int_{B_1^c}\frac{(1-|y|^2)^s}{(|v|^2-1)^s\big||y|e_1-v\big|^n}\,J_{\tau}(|x|e_1;|v|e_1)\;dv\\
&=\int_{B_1^c}P_s(|y|e_1,v)\,J_{\tau}(|x|e_1;v)\;dv.
\end{align*}
\end{proof}
In view of the last lemma \eqref{Q} reduces to 
\begin{multline*}
J_{2s-1}(|x|e_1;|y|e_1)-\int_{B_1^c}P_s(|y|e_1,z) \, J_{2s-1}(|x|e_1;z)\;dz+\frac1{\Lambda(n,s)}\int_{B_1^c}P_s(|y|e_1,z) \, J_{s-1}(|x|e_1;z)\;dz\geq 0, \\
s\in\Big(\frac12,1\Big),\ x,y\in B_1.
\end{multline*}

\begin{lemma}\label{lem:integral computation}
Let $n\geq 2$. For $\varepsilon,r>0$ and $\alpha\in\R$ it holds
\begin{align*}
\int_{\partial B_r}\frac{dx}{\big|x-\eps e_1\big|^{2\alpha}}=\frac{2\pi^{\frac{n-1}{2}}}{\Gamma(\frac{n-1}{2})}\,r^{n-1}\int_{-1}^1\frac{(1-t^2)^{\frac{n-3}{2}}}{\big(r^2+\eps^2-2\eps r t\big)^{\alpha}}\;dt
\end{align*}
In case $r=\eps$, we additionally require $2\alpha<n-1$. In particular, we have for $\tau-\frac{n}{2}\notin\N_0$ 
\[
J_{\tau}(x;y)=\frac{2\pi^{\frac{n-1}2}}{\Gamma(\frac{n-1}2)}\,\kappa(n,\tau)|x|^{n-1}
\int_{-1}^1\frac{(1-t^2)^{\frac{n-3}{2}}}{\big(|x|^2+|y|^2-2|x||y| t\big)^{\frac{n}{2}-\tau}}\;dt\qquad x,y\in\R^n,
\]
where we require additionally $|x|\neq |y|$ if $\tau>\frac12$.
\end{lemma}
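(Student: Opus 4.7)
The proof is a standard spherical-coordinates computation, so the plan is essentially just to identify the right coordinates and track the measure.

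First I would parametrize $\partial B_r$ as $x=r\omega$ with $\omega\in S^{n-1}=\partial B_1$, so that $|x-\eps e_1|^2=r^2+\eps^2-2r\eps(\omega\cdot e_1)$ and the surface measure becomes $r^{n-1}\,d\omega$. The key observation is that the integrand depends on $\omega$ only through $t:=\omega\cdot e_1\in[-1,1]$. Writing any $\omega\in S^{n-1}$ as $\omega=t\,e_1+\sqrt{1-t^2}\,\nu$ with $\nu\in S^{n-2}\subset e_1^\perp$, the standard slicing formula for spherical measure reads
\[
\int_{S^{n-1}}f(\omega\cdot e_1)\,d\omega=\big|S^{n-2}\big|\int_{-1}^1 f(t)\,(1-t^2)^{\frac{n-3}{2}}\,dt,\qquad \big|S^{n-2}\big|=\frac{2\pi^{\frac{n-1}{2}}}{\Gamma(\frac{n-1}{2})}.
\]
Applying this with $f(t)=(r^2+\eps^2-2r\eps t)^{-\alpha}$ immediately yields the asserted identity.

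Next I would check the integrability condition in the critical case $r=\eps$. Near $t=1$ we have $r^2+\eps^2-2r\eps t=2r\eps(1-t)$ and $(1-t^2)^{(n-3)/2}\sim 2^{(n-3)/2}(1-t)^{(n-3)/2}$, so the integrand behaves like $(1-t)^{(n-3)/2-\alpha}$, which is integrable exactly when $(n-3)/2-\alpha>-1$, i.e.\ $2\alpha<n-1$. Near $t=-1$ the denominator stays bounded away from zero, so no additional condition is needed.

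Finally, for the specialization to $J_\tau$ in the case $\tau-\tfrac n2\notin\N_0$, I would invoke definition \eqref{Ftau} to write $F_\tau(\theta-y)=\kappa(n,\tau)\,|\theta-y|^{2\tau-n}$, use the rotational invariance already established in the preceding lemma to assume $y=|y|e_1$, and then apply the first part of the lemma with $r=|x|$, $\eps=|y|$, and $\alpha=\tfrac{n}{2}-\tau$. Dividing by $|\partial B_{|x|}|=|S^{n-2}|\cdot |x|^{n-1}\cdot\frac{|S^{n-1}|}{|S^{n-2}|\cdot 1}$ — more directly, by $|\partial B_{|x|}|=|S^{n-1}|\,|x|^{n-1}$ — and simplifying gives the displayed formula, with the integrability requirement translating into the side condition on $|x|\neq|y|$ in the relevant range of $\tau$.

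There is no real obstacle here; the only point requiring a moment of care is bookkeeping the slicing measure $(1-t^2)^{(n-3)/2}$ and the constant $|S^{n-2}|$, together with the sharp threshold $2\alpha<n-1$ arising from the local behavior at $t=1$ when $r=\eps$.
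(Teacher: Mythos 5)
Your proof is correct and is essentially the paper's own argument: rescale to the unit sphere, observe that the integrand depends only on $\omega\cdot e_1$, apply the slicing formula with $\big|S^{n-2}\big|=2\pi^{\frac{n-1}{2}}/\Gamma(\frac{n-1}{2})$, and then substitute $r=|x|$, $\eps=|y|$, $\alpha=\frac{n}{2}-\tau$; your verification of integrability at $r=\eps$ (yielding the threshold $2\alpha<n-1$) is a welcome addition that the paper leaves implicit. One small caveat: the displayed formula for $J_\tau$ in the lemma is what one gets by direct substitution \emph{without} dividing by $\big|\partial B_{|x|}\big|$ (so, as printed, it is off from the normalization in \eqref{defi:j-tau} by the factor $\big|S^{n-1}\big|\,|x|^{n-1}$), and your final division step would therefore produce a constant differing from the printed one by exactly that factor --- this is an inconsistency in the paper's statement rather than a flaw in your argument.
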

\begin{proof}
Via an explicit calculation
\begin{multline*}
\int_{\partial B_r}\frac{dx}{\big|x-\eps e_1\big|^{2\alpha}} =
r^{n-1}\int_{\partial B_1}\frac{dy}{\big|ry-\eps e_1\big|^{2\alpha}}
=r^{n-1}\int_{\partial B_1} \frac{dy}{\big(r^2-2r\eps y_1+\eps^2\big)^{\alpha}}=\\
=\frac{2\pi^{\frac{n-1}{2}}}{\Gamma(\frac{n-1}{2})}\,
r^{n-1}\int_{-1}^1\frac{(1-t^2)^{\frac{n-3}{2}}}{\big(r^2+\eps^2-2\eps r t\big)^{\alpha}}\;dt.
\end{multline*}
The last part follows by setting $r=|x|$, $\varepsilon=|y|$, and $\alpha=\frac{n}{2}-\tau$.
\end{proof}

\begin{lemma}\label{lemma-monotonicity1}
For $x\in[0,1)$ and $s\in(\frac12,1)$ let 
\begin{align*}
f_1:[0,\infty)\setminus\{x\} & \longrightarrow\R &&& f_2:(1,\infty) & \longrightarrow \R \\
y &\longmapsto J_{2s-1}(xe_1,ye_1) &&& z & \longmapsto J_{s-1}(xe_1,ze_1).
\end{align*}
The following holds.
\begin{enumerate}
\item If $n=1$, then $f_1$ is (\textit{cf}. Figure \ref{fig:J}):
\begin{enumerate}
\item positive for $\frac12<s<\frac34$,
\item negative for $\frac34<s<1$,
\item decreasing in $(x,\infty)$,
\item convex in $(0,x)$.
\end{enumerate}
\item If $n=1$, $s=\frac{3}{4}$, then $f_1(y)=-\frac{1}{2\pi}\ln(|x^2-y^2|)$, so $f_1$ satisfies (c) and (d) in 1. Moreover, $f_1>0$ in $[0,\sqrt{1+x^2})\setminus\{x\}$ and $f_1<0$ in $(\sqrt{1+x^2},\infty)$.
\item $f_2$ is positive and decreasing.
\item For $n\geq 2$ the function $f_1$ is positive and satisfies (c) and (d) in (1).
\end{enumerate}
\end{lemma}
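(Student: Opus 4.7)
The plan is to insert the explicit formula for $F_\tau$ from \eqref{Ftau} into the definition \eqref{defi:j-tau} of $J_\tau$ and then proceed by differentiation, tracking signs via a Gamma-function analysis.

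For $n=1$ and $s\neq 3/4$ (the generic case in (1)), $F_{2s-1}(z)=\kappa(1,2s-1)|z|^{4s-3}$, so
\[
f_1(y)=\tfrac{\kappa(1,2s-1)}{2}\bigl(|x-y|^{4s-3}+(x+y)^{4s-3}\bigr).
\]
A direct inspection of the Gamma factors in $\kappa(1,2s-1)$ shows that $\Gamma(\tfrac{3}{2}-2s)>0$ for $s\in(\tfrac12,\tfrac34)$ while $\Gamma(\tfrac{3}{2}-2s)<0$ for $s\in(\tfrac34,1)$, producing (a) and (b). Differentiating once and twice, the signs of $f_1'$ on $(x,\infty)$ and of $f_1''$ on $(0,x)$ reduce to those of $\kappa(1,2s-1)(4s-3)$ and $\kappa(1,2s-1)(4s-3)(4s-4)$, which are negative and positive respectively in both $s$-ranges, giving (c) and (d). Claim (2) is then the logarithmic branch of \eqref{Ftau}: at $n=1$, $\tau=\tfrac12$ it yields $F_{1/2}(z)=-\tfrac{1}{\pi}\ln|z|$, hence $f_1(y)=-\tfrac{1}{2\pi}\ln|x^2-y^2|$, and all properties then follow by elementary calculus on $\ln$.

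Claim (3) on $f_2$ proceeds along the same lines. A Gamma-function check shows $\kappa(n,s-1)>0$ for every $n\geq 1$ and $s\in(\tfrac12,1)$. For $n=1$ the expression $f_2(z)=\tfrac{\kappa(1,s-1)}{2}\bigl((z-x)^{2s-3}+(z+x)^{2s-3}\bigr)$ is manifestly positive, and monotonicity is immediate from $2s-3<0$. For $n\geq 2$ I would invoke Lemma \ref{lem:integral computation} to write $f_2$ as an integral with strictly positive integrand and then differentiate under the integral sign: $\partial_z(x^2+z^2-2xzt)^{-\nu}<0$ because $2z-2xt>0$ when $z>1>x$ and $t\in[-1,1]$.

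The first three items of (4) for $n\geq 2$ are analogous: positivity from $\kappa(n,2s-1)>0$ together with the strict positivity of the integrand in Lemma \ref{lem:integral computation}, and decrease on $(x,\infty)$ from differentiation under the integral (using $y>x$ to secure $2y-2xt>0$). The main obstacle I expect is \emph{convexity on $(0,x)$}, where $2y-2xt$ changes sign and no pointwise inequality on the integrand is available. To handle it, I would rescale via $\tau=y/x\in(0,1)$, writing $f_1(y)=\mathrm{const}\cdot\Phi(\tau)$ with
\[
\Phi(\tau)=\int_{-1}^1 (1-t^2)^{(n-3)/2}\,(1-2\tau t+\tau^2)^{-\mu}\,dt,\qquad \mu=\tfrac{n}{2}+1-2s>0,
\]
and expand via the Gegenbauer generating series $(1-2\tau t+\tau^2)^{-\mu}=\sum_{k\geq 0}C_k^{\mu}(t)\,\tau^k$. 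Parity kills the odd-$k$ contributions, and for $k=2m$ the connection formula between the Gegenbauer families of orders $\mu$ and $(n-2)/2$ (the latter being orthogonal for the weight $(1-t^2)^{(n-3)/2}$) reduces the integral to a positive multiple of $(2-2s)_m(\mu)_m/[(n/2)_m\,m!]$. Since $2-2s>0$ and $\mu>0$, every Taylor coefficient of $\Phi$ in $\tau^2$ is nonnegative, so $\Phi''\geq 0$ on $(0,1)$ and therefore $f_1''(y)\geq 0$ on $(0,x)$.
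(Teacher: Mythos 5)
Your proposal is correct and follows essentially the same route as the paper: insert the explicit form of $F_\tau$ (respectively the integral formula of Lemma~\ref{lem:integral computation} for $n\ge 2$) into \eqref{defi:j-tau}, read off signs from the Gamma factors, and differentiate. The one place where the paper merely asserts ``from here the statement follows'' while you supply a genuine argument is the convexity of $f_1$ on $(0,x)$ for $n\ge 2$: your Gegenbauer expansion, yielding nonnegative coefficients $(2-2s)_m(\tfrac{n}{2}+1-2s)_m/[(\tfrac{n}{2})_m\,m!]$ for the even powers of $y/x$, is exactly the hypergeometric identity the paper itself derives later (\textit{cf}.\ \eqref{integral A}, \eqref{hyper identity 3}, and \eqref{series def}), so this step checks out.
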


\begin{proof} 
Claims \textit{(1)} and \textit{(2)} follow easily from the definition. 
\begin{figure}[ht]
\begin{tikzpicture}
	\draw[very thick,->] (-.5\textwidth,0pt) -- (-.05\textwidth,0pt);
	\node at (-.05\textwidth+5pt,0) {\footnotesize $y$};
	\draw (-.45\textwidth,-5pt) -- (-.45\textwidth,.2\textwidth);
	\node at (-.45\textwidth,-10pt) {\footnotesize $-1$};
	\draw (-.1\textwidth,-5pt) -- (-.1\textwidth,.2\textwidth);
	\node at (-.1\textwidth,-10pt) {\footnotesize $1$};
	\filldraw (-.275\textwidth,0) circle (1.3pt);
	\node at (-.275\textwidth,-10pt) {\footnotesize $0$};
	\draw[densely dotted] (-.36\textwidth,-3pt) -- (-.36\textwidth,.2\textwidth);
	\node at (-.36\textwidth,-10pt) {\footnotesize $-x$};
	\draw[densely dotted] (-.19\textwidth,-3pt) -- (-.19\textwidth,.2\textwidth);
	\node at (-.19\textwidth,-10pt) {\footnotesize $x$};
	\draw [thick] plot [smooth,tension=.75] coordinates {(-.5\textwidth,13pt) (-.45\textwidth,20pt) (-.38\textwidth,45pt) (-.36\textwidth-2pt,.2\textwidth)};
	\draw [thick] plot [smooth,tension=2] coordinates {(-.36\textwidth+2pt,.2\textwidth) (-.275\textwidth,30pt) (-.19\textwidth-2pt,.2\textwidth)};
	\draw [densely dotted] (-.45\textwidth,20pt) -- (-.1\textwidth,20pt);
	\draw [thick] plot [smooth,tension=.75] coordinates {(-.05\textwidth,13pt) (-.1\textwidth,20pt) (-.17\textwidth,45pt) (-.19\textwidth+2pt,.2\textwidth)};
	\draw [densely dashed] plot [smooth,tension=1.9] coordinates {(-.45\textwidth,20pt) (-.275\textwidth,8pt) (-.1\textwidth,20pt)};
\end{tikzpicture}
\hfill
\begin{tikzpicture}
	\draw[very thick,->] (-.5\textwidth,0pt) -- (-.05\textwidth,0pt);
	\node at (-.05\textwidth+5pt,0) {\footnotesize $y$};
	\draw (-.45\textwidth,5pt) -- (-.45\textwidth,-.2\textwidth);
	\node at (-.45\textwidth,10pt) {\footnotesize $-1$};
	\draw (-.1\textwidth,5pt) -- (-.1\textwidth,-.2\textwidth);
	\node at (-.1\textwidth,10pt) {\footnotesize $1$};
	\draw[densely dotted] (-.36\textwidth,3pt) -- (-.36\textwidth,-.2\textwidth);
	\node at (-.36\textwidth,10pt) {\footnotesize $-x$};
	\draw[densely dotted] (-.19\textwidth,3pt) -- (-.19\textwidth,-.2\textwidth);
	\node at (-.19\textwidth,10pt) {\footnotesize $x$};
	\filldraw (-.275\textwidth,0) circle (1.3pt);
	\node at (-.275\textwidth,10pt) {\footnotesize $0$};
	\draw [thick] plot [smooth,tension=.81] coordinates {(-.5\textwidth,-60pt) (-.45\textwidth,-55pt) (-.38\textwidth,-37pt) (-.36\textwidth,-10pt)};
	\draw [thick] plot [smooth,tension=2] coordinates {(-.36\textwidth,-10pt) (-.275\textwidth,-40pt) (-.19\textwidth,-10pt)};
	\draw [densely dotted] (-.45\textwidth,-55pt) -- (-.1\textwidth,-55pt);
	\draw [thick] plot [smooth,tension=.81] coordinates {(-.05\textwidth,-60pt) (-.1\textwidth,-55pt) (-.17\textwidth,-37pt) (-.19\textwidth,-10pt)};
	\draw [densely dashed] plot [smooth,tension=1.9] coordinates {(-.45\textwidth,-55pt) (-.275\textwidth,-78pt) (-.1\textwidth,-55pt)};
\end{tikzpicture}
\caption{On the left, a qualitative graph of $f_1$ and of its $s$-harmonic extension as entailed by the Poisson integral in \eqref{Q} in $(-1,1)$ (dashed line) for $\frac12<s<\frac34$; on the right, the analogue picture for $\frac34<s<1$.}
\label{fig:J}
\end{figure}
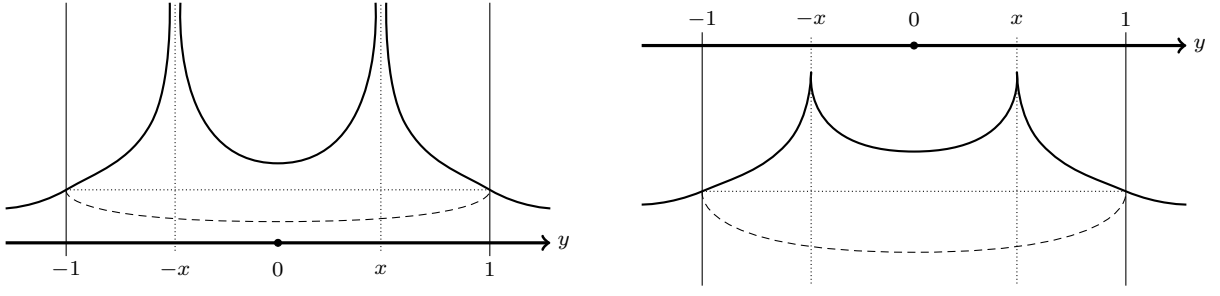

For \textit{(3)} note that by definition and Lemma \ref{lem:integral computation} we have
\[
f_2(z)=\left\{\begin{aligned}
&\frac{2^{1-2s}\Gamma(\frac32-s)(1-s)}{\sqrt{\pi}\,\Gamma(s)}\Big(|x-z|^{2s-3}+|x+z|^{2s-3}\Big)&& n=1;\\
& \frac{(1-s)\Gamma(\frac{n}{2}-s+1)}{2^{2s-3}\sqrt{\pi}\Gamma(s)\Gamma(\frac{n-1}{2})}\,x^{n-1}
\int_{-1}^1\frac{(1-t^2)^{\frac{n-3}{2}}}{(x^2+z^2-2xz t)^{\frac{n}{2}-s+1}}\;dt && n>1,
\end{aligned}\right.
\]
from where it easily follows that we have $f_2'(z)<0<f_2(z)$ for $z\geq 1$.

For \textit{(4)} note that by Lemma \ref{lem:integral computation} we have
\[
f_1(y)=\frac{\Gamma(\frac{n}{2}-2s+1)}{2^{4s-3}\sqrt{\pi}\Gamma(2s-1)\Gamma(\frac{n-1}{2})} \int_{-1}^1\frac{(1-t^2)^{\frac{n-3}{2}}}{(x^2+y^2-2xy t)^{\frac{2+n}{2}-2s}}\;dt
\]
and from here the statement follows again.
\end{proof}

\begin{lemma}\label{lemma-monotonicity2}
For any $s\in(\frac12,1)$, $n\in \N$, and $x,y\in[0,1)$ it holds
\begin{align*}
J_{2s-1}(xe_1;ye_1)-\int_{B_1^c}P_s(ye_1,z) \, J_{2s-1}(xe_1;z)\;dz&\geq J_{2s-1}(xe_1;0)-J_{2s-1}(xe_1,e_1) \\
\int_{B_1^c}P_s(ye_1,z) \, J_{s-1}(xe_1;z)\;dz&\geq \int_{B_1^c}P_s(0,z) \, J_{s-1}(xe_1;z)\;dz.
\end{align*}
\end{lemma}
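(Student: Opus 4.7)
The plan is to recognise both inequalities as monotonicity statements in $|y|$ for (radial) functions on $B_1$, with the common theme that the minimum over $y\in[0,1)$ is attained at $y=0$; thanks to the preceding lemma, both sides already depend only on $|y|$, so this is truly a one-dimensional monotonicity question.

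For the second inequality, set $V(y):=\int_{B_1^c}P_s(ye_1,z)\,J_{s-1}(xe_1;z)\,dz$. By Lemma~\ref{lemma-monotonicity1}\textit{(3)} the integrand $z\mapsto J_{s-1}(xe_1;z)$ is a positive radial function, strictly decreasing in $|z|$ on $(1,\infty)$, so $V$ is the nonlocal Poisson extension to $B_1$ of a positive, radially non-increasing exterior datum. The content of Appendix~\ref{minimum at 0} (as its very title suggests) is exactly that such Poisson extensions are non-decreasing in $|y|$, whence $V(ye_1)\ge V(0)$; this directly yields the second inequality.

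For the first inequality, define $W(y):=J_{2s-1}(xe_1;ye_1)-\int_{B_1^c}P_s(ye_1,z)\,J_{2s-1}(xe_1;z)\,dz$. The semigroup identity $(-\lapl)^s F_{2s-1}=F_{s-1}$ combined with the averaged (spherical, or reflection for $n=1$) definition of $J_\tau$ gives $(-\lapl)^s J_{2s-1}(xe_1;\cdot)=J_{s-1}(xe_1;\cdot)\ge 0$ on $\R^n\setminus\partial B_{|x|}$, so that $J_{2s-1}(xe_1;\cdot)$ is $s$-superharmonic in $B_1$; via the Green representation one obtains $W(y)=\int_{B_1}G_s(y,w)\,J_{s-1}(xe_1;w)\,dw\ge 0$. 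To upgrade this to the claimed bound I would proceed in two steps: \textit{(a)} invoke the "minimum at $0$" machinery of Appendix~\ref{minimum at 0}, applied to the radial function $W$, to reduce the task to showing
\[
W(0)\ \ge\ J_{2s-1}(xe_1;0)-J_{2s-1}(xe_1;e_1);
\]
\textit{(b)} verify this last inequality by rewriting $W(0)=J_{2s-1}(xe_1;0)-\int_{B_1^c}P_s(0,z)\,J_{2s-1}(xe_1;z)\,dz$ and noting that, since $\int_{B_1^c}P_s(0,z)\,dz=1$, the remaining integral is a probability-weighted average of $J_{2s-1}(xe_1;\cdot)$ over $B_1^c$; by Lemma~\ref{lemma-monotonicity1} the integrand is monotone on $(1,\infty)$ with supremum attained at $|z|=1$, hence the average is at most $J_{2s-1}(xe_1;e_1)$.

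The main obstacle will be step \textit{(a)}: while \textit{(b)} is an elementary averaging argument, \textit{(a)} requires a genuine radial monotonicity for the Green-kernel integral $W$, which is delicate because $W$ is a difference of two comparable, individually singular quantities (the singularity of $J_{2s-1}(xe_1;\cdot)$ at $|y|=|x|$ appears on both sides and must cancel). This is precisely the kind of representation/monotonicity result that Appendix~\ref{minimum at 0} is designed to furnish; without it, the inequality is not visible from the pointwise formulas alone.
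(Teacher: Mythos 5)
Your treatment of the second inequality coincides with the paper's: by Lemma \ref{lemma-monotonicity1}\textit{(3)} the datum $z\mapsto J_{s-1}(xe_1;z)$ is positive and radially non-increasing on $B_1^c$, so Proposition \ref{prop:s-harm} makes its Poisson extension radial and radially increasing, and the minimum over $y$ is at $y=0$. Your step \textit{(b)} is also sound and is essentially the paper's own bound: since $\int_{B_1^c}P_s(ye_1,z)\,dz=1$ and $J_{2s-1}(xe_1;\cdot)$ is non-increasing on $(1,\infty)$, the Poisson average $H(y):=\int_{B_1^c}P_s(ye_1,z)J_{2s-1}(xe_1;z)\,dz$ satisfies $H(y)\le J_{2s-1}(xe_1;e_1)$ for \emph{every} $y\in B_1$, which is what Proposition \ref{prop:s-harm} combined with Remark \ref{rmk:sign-changing} delivers.

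Step \textit{(a)}, however, is a genuine gap, and the obstacle you flag is not merely technical: the claimed ``minimum at $0$'' for $W$ is false. Your own Green representation shows why: $W(y)=\int_{B_1}G_s(y,w)\,J_{s-1}(xe_1;w)\,dw$ is strictly positive inside $B_1$ and tends to $0$ as $|y|\to1^-$ (both $J_{2s-1}(xe_1;ye_1)$ and $H(y)$ converge to $J_{2s-1}(xe_1;e_1)$ there), so $\inf_{B_1}W=0$ is attained only in the boundary limit, never at the origin; Appendix \ref{minimum at 0} concerns Poisson extensions of monotone exterior data and provides no radial monotonicity for such Green potentials. The same observation shows that the first inequality cannot hold as stated uniformly up to $|y|=1$ whenever $J_{2s-1}(xe_1;0)>J_{2s-1}(xe_1;e_1)$, i.e.\ precisely in the regime $x\le x_*(n,s)$ of Lemma \ref{lem:midconcavity}: the left-hand side vanishes in the limit while the right-hand side is a fixed positive number. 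What can be proved with the cited ingredients, and what the subsequent reduction to \eqref{Q-easier} actually needs, is the two-regime bound $W(y)\ge\min\big\{0,\,J_{2s-1}(xe_1;0)-J_{2s-1}(xe_1;e_1)\big\}$: for $|y|\le x$ combine $J_{2s-1}(xe_1;ye_1)\ge J_{2s-1}(xe_1;0)$ (convexity on $(0,x)$ plus the vanishing radial derivative at the origin, from Lemma \ref{lemma-monotonicity1}) with $H(y)\le J_{2s-1}(xe_1;e_1)$; for $x<|y|<1$ use instead $J_{2s-1}(xe_1;ye_1)\ge J_{2s-1}(xe_1;e_1)\ge H(y)$, so $W(y)\ge0$, which together with the strict positivity of the $J_{s-1}$-term in \eqref{Q} suffices to conclude.
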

\begin{proof}
These statements follow from Proposition \ref{prop:s-harm} in combination with Remarks \ref{rmk:increasing}, \ref{rmk:sign-changing}, and Lemma \ref{lemma-monotonicity1}.
\end{proof}

In view of Lemma \ref{lemma-monotonicity2} it follows that \eqref{Q} holds once we show the nonnegativity of the function 
\begin{equation}\label{Q-easier}
\begin{split}
[0,1) & \longrightarrow\R \\
x & \longmapsto J_{2s-1}(xe_1;0)-J_{2s-1}(xe_1;e_1)+\frac{1}{\Lambda(n,s)}\int_{B_1^c}P_s(0,z) \, J_{s-1}(xe_1;z)\;dz
\end{split}
\end{equation}

Noting that the last addend is positive, the positivity of the above immediately follows for those $x\in[0,1)$ for which one has
$J_{2s-1}(xe_1;0)\geq J_{2s-1}(xe_1;e_1)$, which is what we study next. 

\begin{lemma}\label{lem:midconcavity}
Let $s\in(\frac12,1)$. Then there exists $x_*(n,s)\in(\frac12,1)$ such that
\[
J_{2s-1}(xe_1;0)\geq J_{2s-1}(xe_1;e_1)
\qquad\text{for all }x\in[0,x_*(n,s)].
\]
More precisely, one can take
\[
x_*(n,s)=\left\{\begin{aligned} & \frac23 && \text{for }\, n=1 \text{ and } s=\frac34;\\
& \frac1{1+\big(2-2^{4s-2-n}\big)^{\frac1{4s-2-n}}} && \text{for }n=1 \text{ and } s\neq \frac34,\text{ or } n\geq 2.
\end{aligned}\right.
\]
\end{lemma}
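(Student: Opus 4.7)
The plan is to dispatch the logarithmic case ($n=1$, $s=\frac{3}{4}$) and the non-logarithmic cases separately. For the logarithmic case, Lemma~\ref{lemma-monotonicity1}(2) gives $J_{1/2}(xe_1;0)=-\tfrac{1}{\pi}\ln x$ and $J_{1/2}(xe_1;e_1)=-\tfrac{1}{2\pi}\ln(1-x^2)$, so the desired inequality is equivalent to $x^2\leq 1-x^2$, i.e.\ $x\leq 1/\sqrt{2}$; since $2/3<1/\sqrt{2}$, the stated value $x_*(1,\tfrac{3}{4})=\tfrac{2}{3}$ works.

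For the non-logarithmic cases, I set $\alpha:=4s-2-n$ and observe from \eqref{Ftau}--\eqref{defi:j-tau} that $J_{2s-1}(xe_1;0)=\kappa(n,2s-1)\,x^{\alpha}$. When $n=1$, the identity $J_{2s-1}(xe_1;e_1)=\tfrac{\kappa(1,2s-1)}{2}\bigl[(1-x)^{\alpha}+(1+x)^{\alpha}\bigr]$ is immediate. When $n\geq 2$, Lemma~\ref{lem:integral computation} writes $J_{2s-1}(xe_1;e_1)$ as the expectation of $f(t):=(x^2+1-2xt)^{-|\alpha|/2}$ under the symmetric probability measure on $[-1,1]$ with density proportional to $(1-t^2)^{(n-3)/2}$; a second-derivative check shows $f''\geq 0$, so the secant bound $f(t)\leq \tfrac{1-t}{2}f(-1)+\tfrac{1+t}{2}f(1)$ holds on $[-1,1]$ and integrating against this symmetric measure (for which the first moment vanishes) produces
\[
J_{2s-1}(xe_1;e_1)\;\leq\;\kappa(n,2s-1)\,\frac{(1-x)^{\alpha}+(1+x)^{\alpha}}{2},
\]
using $f(\pm 1)=(1\mp x)^{\alpha}$ and $\kappa(n,2s-1)>0$ for $n\geq 2$. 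Every non-logarithmic subcase then reduces to the signed one-variable inequality
\[
\mathrm{sgn}\bigl(\kappa(n,2s-1)\bigr)\cdot g(x)\;\geq\;0,\qquad g(x):=2x^{\alpha}-(1-x)^{\alpha}-(1+x)^{\alpha},
\]
to be shown on $[0,x_*(n,s)]$.

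To verify the reduced inequality at $x_*=1/(1+t^*)$ with $(t^*)^{\alpha}=2-2^{\alpha}$, I substitute and compute $(1+t^*)^{\alpha}g(x_*)=2^{\alpha}-(2+t^*)^{\alpha}$, which has the opposite sign to $\alpha$ since $t^*>0$ and $u\mapsto u^{\alpha}$ is monotone. A quick check of the gamma factor in $\kappa(n,2s-1)$ shows $\mathrm{sgn}\bigl(\kappa(n,2s-1)\bigr)=-\mathrm{sgn}(\alpha)$ in every non-logarithmic subcase, so the signed inequality holds at $x_*$. I then extend to all of $[0,x_*]$ by differentiating:
\[
g'(x)\;=\;\alpha\bigl[2x^{\alpha-1}+(1-x)^{\alpha-1}-(1+x)^{\alpha-1}\bigr];
\]
since $\alpha<1$ throughout our range, one has $(1-x)^{\alpha-1}>(1+x)^{\alpha-1}$ and $x^{\alpha-1}>0$, so the bracket is strictly positive on $(0,1)$ and $g'$ inherits the sign of $\alpha$. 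Consequently $g$ is monotone in precisely the direction needed to propagate the signed inequality from $x=x_*$ back to $x=0$.

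The main technical obstacle I anticipate is securing a tractable upper bound on $J_{2s-1}(xe_1;e_1)$ when $n\geq 2$; the convexity-based secant reduction to the $n=1$ form is the key maneuver that avoids a direct hypergeometric analysis and channels every subcase into the same elementary one-variable computation.
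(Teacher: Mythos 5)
Your proof is correct and lands on the same skeleton as the paper's (the logarithmic case by direct computation, then the reduction of every non-logarithmic case to the one-variable inequality $2x^{\alpha}\gtrless(1-x)^{\alpha}+(1+x)^{\alpha}$ with $\alpha=4s-2-n$), but the two key sub-steps are argued differently. For $n\geq 2$ the paper obtains the bound on $j(x,0)-j(x,1)$ by splitting the $t$-integral at $t=\tfrac{1}{2x}$ and estimating the two pieces rather crudely; your convexity/secant argument---$t\mapsto(x^2+1-2xt)^{\alpha/2}$ is convex, the weight $(1-t^2)^{(n-3)/2}$ is a symmetric probability density with vanishing first moment, hence the average is at most $\tfrac12\bigl[(1-x)^{\alpha}+(1+x)^{\alpha}\bigr]$---is cleaner and delivers exactly the same reduction. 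For the resulting one-variable inequality the paper splits it into the sufficient system $(2-2^{\alpha})x^{\alpha}\gtrless(1-x)^{\alpha}$ and $2^{\alpha}x^{\alpha}\gtrless(1+x)^{\alpha}$, each solved by monotonicity of $u\mapsto u^{\alpha}$, which is how the formula for $x_*$ is \emph{discovered}; you instead \emph{verify} the given $x_*$ by evaluating $g$ there (your identity $(1+t^*)^{\alpha}g(x_*)=2^{\alpha}-(2+t^*)^{\alpha}$ checks out) and propagate to $[0,x_*]$ via the sign of $g'$, which indeed matches $\mathrm{sgn}(\alpha)=-\mathrm{sgn}\bigl(\kappa(n,2s-1)\bigr)$ in every subcase. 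Both routes are valid; yours trades the paper's constructive derivation for a tighter verification. One small omission: the lemma also asserts $x_*(n,s)\in(\tfrac12,1)$, which you never check; it follows in one line from $t^*=(2-2^{\alpha})^{1/\alpha}\in(0,1)$ for all $\alpha<1$, $\alpha\neq 0$ (base and exponent are either both in $(0,1)$-versus-$(1,\infty)$ configuration giving a value below $1$), so you should add that remark.
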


\begin{remark}
\hspace{1em}
\begin{enumerate}
\item Note here that $x_*(1,s)\geq \frac35$. The statement of Lemma \ref{lem:midconcavity} hence gives an alternative proof to the mid-concavity as shown in~\cite{MR2217951}*{Theorem 1.1}, although just in dimension $n=1$.
\item Similarly to the previous point, $x_*(n,s)>\frac12$ for any $n\in \N$. The statement of Lemma \ref{lem:midconcavity} gives therefore super-harmonicity in the ball $B_{1/2}$ in any dimension.
\item To show Theorems \ref{main theorem} and \ref{main theorem 2}, it is in view of Lemma \ref{lem:midconcavity} enough to show the positivity of \eqref{Q-easier} for $x\in(x_*(n,s),1)$ and $s\in(\frac12,1)$.
\end{enumerate}
\end{remark}

\begin{proof}[Proof of Lemma \ref{lem:midconcavity}]
If $n=1$, $s=\frac34$, then 
\[
J_{2s-1}(x;y)=J_{\frac12}(x;y)=-\frac{1}{2\pi}\ln(|x^2-y^2|)\quad \text{for $x\in[0,1)$, $y\geq 0$, with $x\neq y$.}
\]
Hence,
\[
0\leq J_{2s-1}(x;0)-J_{2s-1}(x;1)=-\frac1{2\pi}\ln(x^2)+\frac{1}{\pi}\ln(1-x^2)=\frac1{2\pi}\ln\Big(\frac{1}{x^2}-1\Big)
\]
if and only if $x\leq \frac{\sqrt{2}}{2}$ and clearly $x_*(1,\frac34)=\frac23<\frac{\sqrt{2}}{2}$.\\
If $n=1$ and $s\neq \frac34$, it follows that $J_{2s-1}(x;0)\geq J_{2s-1}(x;1)$ holds for those $x$'s, where we have 
\begin{align*}
2\Gamma(3/2-2s)x^{4s-3}\geq\Gamma(3/2-2s)\Big({(1-x)}^{4s-3}+{(1+x)}^{4s-3}\Big).
\end{align*}
Noting that $\Gamma(3/2-2s)$ changes its sign at $s=\frac34$, the claim amounts to checking
\begin{align*}
\begin{aligned}
& 2{x}^{4s-3}\geq{(1-x)}^{4s-3}+{(1+x)}^{4s-3}
&& \text{for }s\in\Big(\frac12,\frac34\Big), \\
& 2{x}^{4s-3}\leq{(1-x)}^{4s-3}+{(1+x)}^{4s-3}
&& \text{for }s\in\Big(\frac34,1\Big).
\end{aligned}
\end{align*}
A sufficient condition, is then given by
\begin{align*}
\left\lbrace\begin{aligned}
\big(2-2^{4s-3}\big){x}^{4s-3} &\geq {(1-x)}^{4s-3} \\
2^{4s-3}{x}^{4s-3} &\geq {(1+x)}^{4s-3} \\
s &\in\Big(\frac12,\frac34\Big)
\end{aligned}\right.
\qquad\text{or}\qquad
\left\lbrace\begin{aligned}
\big(2-2^{4s-3}\big){x}^{4s-3} &\leq {(1-x)}^{4s-3} \\
2^{4s-3}{x}^{4s-3} &\leq {(1+x)}^{4s-3} \\
s &\in\Big(\frac34,1\Big)
\end{aligned}\right.
\end{align*}
which are both equivalent to
\begin{align*}
\left\lbrace\begin{aligned}
\big(2-2^{4s-3}\big)^{\frac1{4s-3}}x &\leq 1-x, \\
2x &\leq 1+x.
\end{aligned}\right.
\end{align*}
This gives $J_{2s-1}(x,0)\geq J_{2s-1}(x,1)$ for all $x$ such that
\begin{align*}
x \leq \frac1{1+\big(2-2^{4s-3}\big)^{\frac1{4s-3}}}=:x_*(1,s),
\qquad s\in\Big(\frac12,1\Big)\setminus\Big\{\frac34\Big\}.
\end{align*}
It can be easily verified that $x_*(1,\cdot):(\frac12,1)\setminus\{\frac34\}\to\R$ satisfies
\begin{align*}
& \lim_{s\downarrow1/2}x_*(1,s)=\frac35, &
& \lim_{s\uparrow1}x_*(1,s)=1, 
& \lim_{s\to3/4}x_*(1,s)=\frac23,\\
& \frac{d}{ds}x_*(1,s)>0, && \lim_{s\uparrow 1}\frac{d}{ds}x_*(1,s)=8\ln2,
& x_*(1,s)\in\Big(\frac35,1\Big).
\end{align*}

\begin{figure}
\includegraphics[width=0.5\columnwidth]{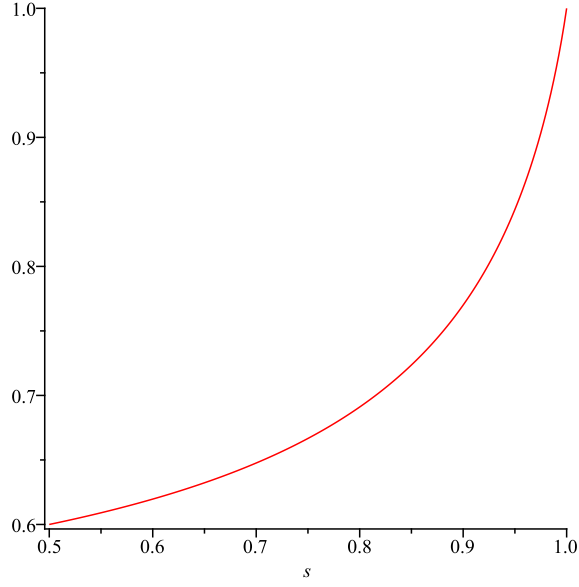}
\caption{A graph of $x_*(1,s)$, $\frac12<s<1$.}
\end{figure}

Finally, let $n\geq 2$. By Lemma \ref{lem:integral computation} we have for $x,y\in[0,1)$
\begin{equation}
\label{J-2s-1-specialcase}
J_{2s-1}(xe_1;ye_1)=\frac{\Gamma(\frac{n}{2}+1-2s)x^{n-1}}{4^{2s-1}\sqrt{\pi}\Gamma(2s-1)\Gamma(\frac{n-1}{2})} j(x,y),
\end{equation}
where we put
\[
j:[0,1]\times[0,1]\to[0,\infty],\quad j(a,b)=\int_{-1}^1\frac{(1-t^2)^{\frac{n-3}{2}}}{(a^2+b^2-2ab t)^{\frac{n}{2}+1-2s}}dt.
\]
Note here, that for $a>0$
\begin{align*}
j(a,a)&=(2a)^{4s-2-n}\int_{-1}^1(1+t)^{\frac{n-3}{2}}(1-t)^{2s-\frac{5}{2}}\ dt\ \left\{ \begin{aligned} &<\infty&& \text{for }s>\frac34;\\
&=\infty && \text{for }s\leq\frac34, \end{aligned}\right.\\
j(a,0)&=a^{4s-2-n}\int_{-1}^1(1-t^2)^{\frac{n-3}{2}}\ dt=a^{4s-2-n}\frac{\Gamma(\frac{n-1}{2})\sqrt{\pi}}{\Gamma(\frac{n}{2})},\quad\text{and}\quad\\
j(a,0)-j(a,1)&=\int_{-1}^1 (1-t^2)^{\frac{n-3}{2}}\Big(a^{4s-2-n}-\frac{1}{(a^2+1-2a t)^{\frac{n}{2}+1-2s}}\Big)dt.
\end{align*}
Since
\[
a^{4s-2-n}\geq\frac{1}{(a^2+1-2a t)^{\frac{n}{2}+1-2s}}\quad\Leftrightarrow\quad a^2+1-2a t> a^2\quad\Leftrightarrow\quad 1> 2at
\]
it follows that we have $j(a,0)-j(a,1)\geq0$ for $a\in(0,\frac12]$. To be more precise on the estimate, next note that it also follows for $a\in(\frac12,1)$
\begin{align*}
j(a,0)-j(a,1)&=  \int_{-1}^{\frac{1}{2a}} (1-t^2)^{\frac{n-3}{2}}\Big(a^{4s-2-n}-\frac{1}{(a^2+1-2a t)^{\frac{n}{2}+1-2s}}\Big)dt\\
&\qquad\qquad\qquad -\int_{\frac{1}{2a}}^1 (1-t^2)^{\frac{n-3}{2}}\Big(\frac{1}{(a^2+1-2a t)^{\frac{n}{2}+1-2s}}-a^{4s-2-n}\Big)dt\\
&\geq \Big(a^{4s-2-n}-\frac{1}{(a^2+1+2a)^{\frac{n}{2}+1-2s}}\Big)\int_{0}^{1} (1-t^2)^{\frac{n-3}{2}}\ dt\\
&\qquad\qquad\qquad -\Big(\frac{1}{(a^2+1-2a)^{\frac{n}{2}+1-2s}}-a^{4s-2-n}\Big)\int_{0}^{1} (1-t^2)^{\frac{n-3}{2}}\ dt\\
&=\frac{\Gamma(\frac{n-1}{2})\sqrt{\pi}}{2\Gamma(\frac{n}{2})}\Big( 2a^{4s-2-n}-(a+1)^{4s-2-n}-(1-a)^{4s-2-n}\Big).
\end{align*}
Arguing as in the case $n=1$, $s\neq \frac34$, we see that $j(a,0)-j(a,1)\geq 0$ holds for all $a\in(0,x_*(n,s)]$ with the claimed $x_*(n,s)$. With \eqref{J-2s-1-specialcase}, this concludes the proof.
\end{proof}

The analysis carried out so far allows us to reduce the condition in \eqref{Q} to the following stronger one:
\begin{multline}\label{Qbis}
J_{2s-1}(xe_1;0)-J_{2s-1}(xe_1;1)+\frac1{\Lambda(n,s)}\int_{B_1^c}P_s(0,z) \, J_{s-1}(xe_1;z)\;dz\geq 0, \\
s\in\Big(\frac12,1\Big),\ x_*(n,s)<x<1.
\end{multline}

Before we turn to the study of \eqref{Qbis}, let us prove a technical lemma which will be needed in the following.

\begin{lemma}\label{lem:estimate integral}
For $x\in(0,1)$ and $s\in(\frac12,1)$ it holds
\[
\int_1^\infty\frac{(z-x)^{2s-3}+(z+x)^{2s-3}}{z\,\big(z^2-1\big)^s}\;dz
\geq
\frac{\Gamma(1-s)\sqrt\pi}{\Gamma(\frac32-s)}\big(1-x^2\big)^{s-2}\big(2x-1\big).
\]
\end{lemma}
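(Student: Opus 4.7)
My plan is to reduce the LHS to a bounded integral via $z\mapsto 1/t$, which gives
\[
\int_1^\infty\frac{(z-x)^{2s-3}+(z+x)^{2s-3}}{z(z^2-1)^s}\,dz=\int_0^1\frac{t^2\bigl[(1-xt)^{2s-3}+(1+xt)^{2s-3}\bigr]}{(1-t^2)^s}\,dt.
\]
For $x\in[0,\tfrac12]$ the RHS of the claim is nonpositive (since $2x-1\le 0$) while the LHS is nonnegative, so the inequality is immediate. I therefore focus on $x\in(\tfrac12,1)$. Since $(1+xt)^{2s-3}>0$, I may discard it and work with the lower bound $\int_0^1 t^2(1-xt)^{2s-3}(1-t^2)^{-s}\,dt$; this is the term carrying the singular behaviour of the LHS as $x\to 1$, while the dropped term stays bounded.

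Next, the M\"obius substitution $t=(1-v)/(1-xv)$ factors out the singularity: a direct computation gives
\[
\int_0^1\frac{t^2(1-xt)^{2s-3}}{(1-t^2)^s}\,dt=(1-x)^{s-2}\int_0^1\frac{(1-v)^2}{v^s(1-xv)\,(2-(1+x)v)^s}\,dv.
\]
The remaining integral extends continuously to $x=1$ with limit $2^{-s}B(1-s,2-s)$, and by the Legendre duplication formula $\Gamma(2-s)\Gamma(\tfrac32-s)=2^{2s-2}\sqrt\pi\,\Gamma(3-2s)$ this equals $2^{s-2}B(1-s,\tfrac12)$, which is precisely the leading coefficient one reads off the right-hand side of the claim when expanded at $x=1$: indeed $(1-x^2)^{s-2}(2x-1)\sim 2^{s-2}(1-x)^{s-2}$. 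Hence the bound is asymptotically sharp at $x=1$, confirming that no cruder estimate can work in that limit.

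To finish, the inequality becomes a one-parameter comparison
\[
\int_0^1\frac{(1-v)^2}{v^s(1-xv)(2-(1+x)v)^s}\,dv\;\ge\;B\!\left(1-s,\tfrac12\right)(1+x)^{s-2}(2x-1),\qquad x\in(\tfrac12,1).
\]
Using the Euler representation, the LHS can be recognised as (a constant times) a Gauss hypergeometric function after the further estimate on $(2-(1+x)v)^{-s}$, and the inequality reduces to a tractable hypergeometric estimate with equality at $x=1$. I expect the verification to be completed either by a monotonicity/convexity argument exploiting the Euler transformation
\[
{}_2F_1(a,b;c;x)=(1-x)^{c-a-b}\,{}_2F_1(c-a,c-b;c;x),
\]
or, in the spirit of the paper, by a final numerical check once the inequality has been reduced to a simple expression in $s\in(\tfrac12,1)$.

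The main obstacle will be this last step: since both sides are asymptotically equal at $x=1$ with matching first derivatives (this is forced by the duplication formula), the comparison is delicate and must exploit the fact that the corrective term in the expansion of $K(x,s):=\int_0^1(1-v)^2v^{-s}(1-xv)^{-1}(2-(1+x)v)^{-s}\,dv$ at $x=1$ is of fractional order $(1-x)^{2-s}$ with positive sign (reflecting the non-smooth dependence of the integrand at $v=1$ when $x=1$), whereas the analogous Taylor correction of $(1+x)^{s-2}(2x-1)$ is of smooth order $(1-x)^2$ and smaller.
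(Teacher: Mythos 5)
Your reduction is sound as far as it goes --- the substitution $z\mapsto 1/t$, the trivial disposal of $x\le\frac12$, and the observation that the two sides share the same leading coefficient $\frac{\Gamma(1-s)\sqrt\pi}{\Gamma(\frac32-s)}\,2^{s-2}(1-x)^{s-2}$ as $x\to1$ are all correct (your Beta-function and duplication-formula manipulations check out). But the proof is not complete: the entire content of the lemma in the nontrivial range $x\in(\frac12,1)$ is the comparison you defer at the end to ``a monotonicity/convexity argument'' or ``a final numerical check.'' An asymptotic expansion at $x=1$, even with the sign information on the $(1-x)^{2-s}$ correction, only yields the inequality in a neighbourhood of $x=1$; it says nothing about $x$ near $\frac12$ or in between, and no argument is supplied there. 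As it stands this is a plan for a proof, with the decisive step missing.

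The paper closes exactly this gap by a different (and fully explicit) route that you may want to compare with. Instead of discarding $(1+xt)^{2s-3}$, it folds the two terms into a single integral $\int_{-1}^1 t^2(1-xt)^{2s-3}(1-t^2)^{-s}\,dt$ over the symmetric interval, then writes $t^2=1-(1-t^2)$ to split off
\begin{equation*}
\int_{-1}^1\frac{(1-xt)^{2s-3}}{(1-t^2)^s}\,dt-\int_{-1}^1\frac{(1-xt)^{2s-3}}{(1-t^2)^{s-1}}\,dt .
\end{equation*}
After $t=2\tau-1$, the first integral is an Euler integral for ${}_2F_1\big(3-2s,1-s;2-2s\,\big|\,\tfrac{2x}{1+x}\big)$, which collapses by the identity ${}_2F_1(a,b;a-1|z)=(1-z)^{-b-1}\big(1-\tfrac{a-b-1}{a-1}z\big)$ to the \emph{exact} value $\frac{\Gamma(1-s)\sqrt\pi}{\Gamma(\frac32-s)}(1-x^2)^{s-2}$; the second integral is integrated by parts and bounded above by $\frac{\Gamma(1-s)\sqrt\pi}{\Gamma(\frac32-s)}(1-x^2)^{s-2}\cdot 2(1-x)$ by the same mechanism. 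The difference is precisely the stated right-hand side with $2x-1=1-2(1-x)$, uniformly in $x\in(0,1)$, with no asymptotic matching or numerics required. If you wish to salvage your approach, you must actually prove your final one-parameter inequality for all $x\in(\frac12,1)$; note also that, having dropped a strictly positive bounded term, your bound is no longer an identity-plus-clean-estimate, so the ``delicate'' comparison you flag is genuinely the hard part and cannot be waved through.
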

\begin{proof}
With the change of variables $z=\frac1t$, we write
\begin{align*}
\int_1^\infty\frac{(z-x)^{2s-3}+(z+x)^{2s-3}}{z\,\big(z^2-1\big)^s}\;dz
& =\int_0^1\frac{(1-xt)^{2s-3}+(1+xt)^{2s-3}}{\big(1-t^2\big)^s}\,t^2\;dt
=\int_{-1}^1\frac{(1-xt)^{2s-3}}{\big(1-t^2\big)^s}\,t^2\;dt \\
&=\int_{-1}^1\frac{(1-xt)^{2s-3}}{\big(1-t^2\big)^s}\;dt 
-\int_{-1}^1\frac{(1-xt)^{2s-3}}{\big(1-t^2\big)^{s-1}}\;dt.
\end{align*}
We integrate the second integral in the above expression by parts, obtaining
\begin{align*}
\int_{-1}^1(1-xt)^{2s-3}\big(1-t^2\big)^{1-s}\;dt=
\frac1x\int_{-1}^1(1-xt)^{2s-2}\big(1-t^2\big)^{-s}t\;dt.
\end{align*}
Via another change of variable, namely $t=2\tau-1$, we obtain
\begin{align*}
\int_{-1}^1\frac{(1-xt)^{2s-3}}{\big(1-t^2\big)^s}\;dt
&=
2^{1-2s}\int_0^1\frac{(1+x-2x\tau)^{2s-3}}{\tau^s\,(1-\tau)^s}\;d\tau \\
&=
\frac{2^{1-2s}\Gamma(1-s)^2}{\Gamma(2-2s)}(1+x)^{2s-3}\hf\Big(3-2s,1-s;2-2s\Big|\frac{2x}{1+x}\Big) \\
&=
\frac{2^{1-2s}\Gamma(1-s)^2}{\Gamma(2-2s)}(1+x)^{2s-3}\Big(1-\frac{2x}{1+x}\Big)^{s-2}\Big(1-\frac{x}{1+x}\Big) \\
&=
\frac{2^{1-2s}\Gamma(1-s)^2}{\Gamma(2-2s)}\big(1-x^2\big)^{s-2}
\end{align*}
(where we have used \eqref{hyper def} and \eqref{hyper identity 2}) and, similarly,
\begin{align*}
\int_{-1}^1\frac{(1-xt)^{2s-2}}{\big(1-t^2\big)^s}\,t\;dt
&= 
\int_{-1}^1\frac{(1-xt)^{2s-2}}{\big(1-t^2\big)^s}\;dt-\int_{-1}^1\frac{(1-xt)^{2s-2}}{(1+t)^s}\,(1-t)^{1-s}\;dt \\
&=
2^{1-2s}\int_0^1\frac{(1+x-2x\tau)^{2s-2}}{\tau^s\,(1-\tau)^s}\;d\tau
-2^{2-2s}\int_0^1\frac{(1+x-2x\tau)^{2s-2}}{\tau^s}\,(1-\tau)^{1-s}\;d\tau \\
&\leq
\frac{2^{1-2s}\Gamma(1-s)^2}{\Gamma(2-2s)}(1+x)^{2s-2}\hf\Big(2-2s,1-s;2-2s\Big|\frac{2x}{1+x}\Big) \\
& \qquad
-2^{2-2s}(1+x)^{2s-2}\Big(1-\frac{2x}{1+x}\Big)\int_0^1\frac{(1-\frac{2x}{1+x}\tau)^{2s-3}}{\tau^s}\,(1-\tau)^{1-s}\;d\tau \\ 
&=
\frac{2^{1-2s}\Gamma(1-s)^2}{\Gamma(2-2s)}(1+x)^{2s-2}\hf\Big(2-2s,1-s;2-2s\Big|\frac{2x}{1+x}\Big) \\
& \qquad
-\frac{2^{2-2s}\Gamma(1-s)\Gamma(2-s)}{\Gamma(3-2s)}(1+x)^{2s-3}(1-x)\hf\Big(3-2s,1-s;3-2s\Big|\frac{2x}{1+x}\Big) \\ 
&=
\frac{2^{1-2s}\Gamma(1-s)^2}{\Gamma(2-2s)}(1+x)^{2s-2}\Big(1-\frac{2x}{1+x}\Big)^{s-1} \\
& \qquad
-\frac{2^{1-2s}\Gamma(1-s)^2}{\Gamma(2-2s)}(1+x)^{2s-3}(1-x)\Big(1-\frac{2x}{1+x}\Big)^{s-1} \\
&=
\frac{2^{1-2s}\Gamma(1-s)^2}{\Gamma(2-2s)}\bigg[\big(1-x^2\big)^{s-1}-(1+x)^{s-2}(1-x)^s\bigg] \\
&=
\frac{2^{1-2s}\Gamma(1-s)^2}{\Gamma(2-2s)}\big(1-x^2\big)^{s-2}(1-x)\cdot 2x.
\end{align*}
We then deduce
\begin{align*}
\int_1^\infty\frac{(z-x)^{2s-3}+(z+x)^{2s-3}}{z\,\big(z^2-1\big)^s}\;dz \geq
\frac{2^{1-2s}\Gamma(1-s)^2}{\Gamma(2-2s)}\big(1-x^2\big)^{s-2}\big[1-2(1-x)\big].
\end{align*}
The constant in front the above expression can be transformed using the identities on the Gamma function, namely the Legendre duplication formula
\[
\Gamma(1-s)\Gamma\Big(\frac32-s\Big)=2^{1-2(1-s)}\sqrt\pi\,\Gamma(2-2s),
\]
concluding the proof.
\end{proof}

\section{The one-dimensional case: \texorpdfstring{$s=3/4$}{s=3/4}}
\label{logarithmic case}

Note first, that in this case it follows from \eqref{Ftau} and \eqref{defi:j-tau} that
\[
J_{2s-1}(x;y)\Big|_{s=\frac34,n=1}=J_{\frac{1}{2}}(x;y)=-\frac{1}{2\pi}\ln|x^2-y^2|\qquad \text{for }  x,y\in\R,\ x\neq y.
\]
Moreover (\textit{cf}. \eqref{new bound on lambda n=1} and \eqref{poisson})
\begin{align*}
\Lambda\Big(1,\frac34\Big)
&=\frac{2\Gamma(\frac74)}{\Gamma(\frac94)},\\ 
P_{\frac34}(x,z)&=\frac{\sqrt{2}}{2\pi}\frac{\big(1-x^2\big)^{3/4}}{|x-z|\big(z^2-1\big)^{3/4}},\quad\text{for $x\in(-1,1)$, $|z|>1$,}\\
J_{s-1}(x;y)\Big|_{s=\frac34,n=1}&=J_{-\frac{1}{4}}(x;z)=
\frac{1}{4\sqrt{2\pi}}\Big(|x-y|^{-3/2}+|x+y|^{-3/2}\Big)\quad \text{for }x,y\in\R,\ x\neq z.
\end{align*}
As our goal is to verify \eqref{Qbis}, we have to prove
\begin{align*}
-2\ln x+\ln(1-x^2)+\frac{1}{2\Lambda(1,\frac34)\sqrt{\pi}}
\int_1^\infty\frac{(z-x)^{-3/2}+(z+x)^{-3/2}}{z\big(z^2-1\big)^{3/4}}\;dz\geq 0
\qquad\text{for }x_*\Big(1,\frac34\Big)=\frac23<x<1.
\end{align*}
To this aim, we estimate the integral above using Lemma \ref{lem:estimate integral},
which gives us
\[
\int_1^\infty\frac{(z-x)^{-3/2}+(z+x)^{-3/2}}{z\big(z^2-1\big)^{3/4}}\;dz
\geq 
\frac{\Gamma(\frac14)\sqrt\pi}{\Gamma(\frac34)}\big(1-x^2\big)^{-5/4}\big(2x-1\big).
\]
In this way, we are left with verifying
\begin{multline*}
-2\ln x+\ln(1-x^2)
+\frac{\Gamma(\frac94)}{4\Gamma(\frac74)}\frac{\Gamma(\frac14)}{\Gamma(\frac34)}\big(1-x^2\big)^{-5/4}\big(2x-1\big)=\\
=-2\ln x+\ln(1-x^2)
+\frac{5\,\Gamma(\frac14)^2}{48\,\Gamma(\frac34)^2}\big(1-x^2\big)^{-5/4}\big(2x-1\big)
\geq 0
\qquad\text{for }\frac23<x<1.
\end{multline*}
Using the fact that
\[
2x-2\geq-\frac32(1-x^2)
\quad\text{and}\quad\big(1-x^2\big)^{-5/4}\geq \big(1-x^2\big)^{-1}
\qquad\text{for }\frac23<x<1
\]
in the following we rather show
\begin{align*}
-2\ln x+\ln(1-x^2)+c\big(1-x^2\big)^{-1}\Big(1-\frac32(1-x^2)\Big)
\geq 0
\qquad\text{for }\frac23<x<1,\
c = \frac{5\,\Gamma(\frac14)^2}{48\,\Gamma(\frac34)^2}=0.91...
\end{align*}
We do so by re-labeling $t=1-x^2$ and by checking 
\begin{align}\label{54545454545}
-\ln(1-t)+\ln t+ct^{-1}\Big(1-\frac32t\Big)\geq 0
\qquad\text{for }0<t<\frac59.
\end{align}
The above inequality follows by computing the minimum of the left-hand side in the given range for $t$. Indeed,
\begin{align*}
\frac{d}{dt}\bigg[-\ln(1-t)+\ln t+ct^{-1}\Big(1-\frac32t\Big)\bigg]=\frac1{1-t}+\frac1t-\frac{c}{t^2}\geq 0 \qquad\text{for }0<t<\frac59
\end{align*}
if and only 
\begin{align*}
0.4769...=\frac{c}{c+1}\leq t<\frac59
\end{align*}
so that the left-hand side of \eqref{54545454545} attains its minimum at $c/(c+1)$ where it equals
\begin{align*}
-\ln(1-\frac{c}{c+1})+\ln \frac{c}{c+1}+(c+1)\Big(1-\frac32\frac{c}{c+1}\Big)=\ln c+1-\frac{c}2=0.45...>0.
\end{align*}
Hence, \eqref{Qbis} holds for $n=1$ and $s=\frac34$.

\section{The general one-dimensional case}\label{1 s}

In the following, we analyze \eqref{Qbis} with $s\neq \frac{3}{4}$ and $n=1$. Recall the definition of $x_*(1,s)$ in Lemma \ref{lem:midconcavity}. 
As an application of Lemma \ref{lem:estimate integral} and of definitions \eqref{Ftau} and \eqref{poisson}
we rather check that 
\begin{multline*}
\frac{2x^{4s-3}-(1-x)^{4s-3}-(1+x)^{4s-3}}{3-4s}+\mu(1-x^2)^{s-2}\big(2x-1\big)\geq 0, \\
\text{for }s\in\Big(\frac12,1\Big)\setminus\Big\{\frac34\Big\},\ x_*(1,s)<x<1,
\end{multline*}
where (recall \eqref{bigger lambda} and \eqref{new bound on lambda n=1})
\begin{align*}
\mu=\frac1{2^{2s-1}\Lambda\Gamma(s)|\Gamma(s-1)|}\frac{2^{4s-2}\sqrt\pi\,\Gamma(2s-1)}{\Gamma(\frac52-2s)}=\frac{2^{2s-1}(1-s)\sqrt\pi}{s}
\frac{\Gamma(\frac32+s)\Gamma(2s-1)}{\Gamma(s)^3\,\Gamma(\frac32+2s)\Gamma(\frac52-2s)}.
\end{align*}

Note that the function 
\begin{align*}
(0,1]\ni x\longmapsto \frac{1}{3-4s}\Big(2{x}^{4s-3}-{(1+x)}^{4s-3}\Big)
\end{align*}
is decreasing. Indeed, this follows by differentiation:
\begin{align*}
\frac{1}{3-4s}\frac{d}{dx}\Big(2{x}^{4s-3}-{(1+x)}^{4s-3}\Big)= -\Big(2{x}^{4(s-1)}-{(1+x)}^{4(s-1)}\Big)<0
\qquad\text{for }x\in(0,1].
\end{align*}
Then, fixing $a,b\in[\frac35,1]$ with $a<b$ we find with this for $x\in(a,b)$
\begin{align*}
&\frac{2x^{4s-3}-(1-x)^{4s-3}-(1+x)^{4s-3}}{3-4s}+\mu(1-x^2)^{s-2}(2x-1)\\
&\qquad\geq \frac{2b^{4s-3}-(1+b)^{4s-3}}{3-4s}-\frac{(1-x)^{4s-3}}{3-4s}+(1+b)^{s-2}(2a-1)\mu(1-x)^{s-2}=:q_{a,b}(s,x).
\end{align*}
A direct computation gives that the function $(a,b)\ni x\mapsto q_{a,b}(s,x)$
is controlled from below in $(a,b)$ by the value
\[
q_{a,b}\big(s,x_{a,b}(s)\big),
\qquad \text{where }
x_{a,b}(s):=1-\Big((1+b)^{s-2}(2a-1)(2-s)\mu\Big)^{\frac{1}{3s-1}}.
\]
Keeping this in mind, we split
\begin{gather*}
\Big(\frac35,1\Big) =
\big(a_1,b_1\big]\cup\big(a_2,b_2\big]\cup\big(a_3,b_3\big]\cup\big(a_4,b_4\big) \\
a_1=\frac35,\ b_1=a_2=\frac7{10},\ b_2=a_3=\frac45,\ b_3=a_4=\frac9{10},\ b_4=1.
\end{gather*}
In each of these subintervals it holds that 
\[
q_{a_i,b_i}(s,x)\geq q_{a_i,b_i}\big(s,x_{a_i,b_i}(s)\big)>0
\qquad \text{for }x\in(a_i,b_i),\ s\in\Big(\frac12,1\Big)\setminus\Big\{\frac34\Big\},\ i\in\{1,2,3,4\},
\]
see Figure \ref{plot-of-ms1}. From this it follows that \eqref{Qbis} holds for $n=1$.
\begin{figure}
\centering
\begin{minipage}{.49\textwidth}
\includegraphics[width=0.9\columnwidth]{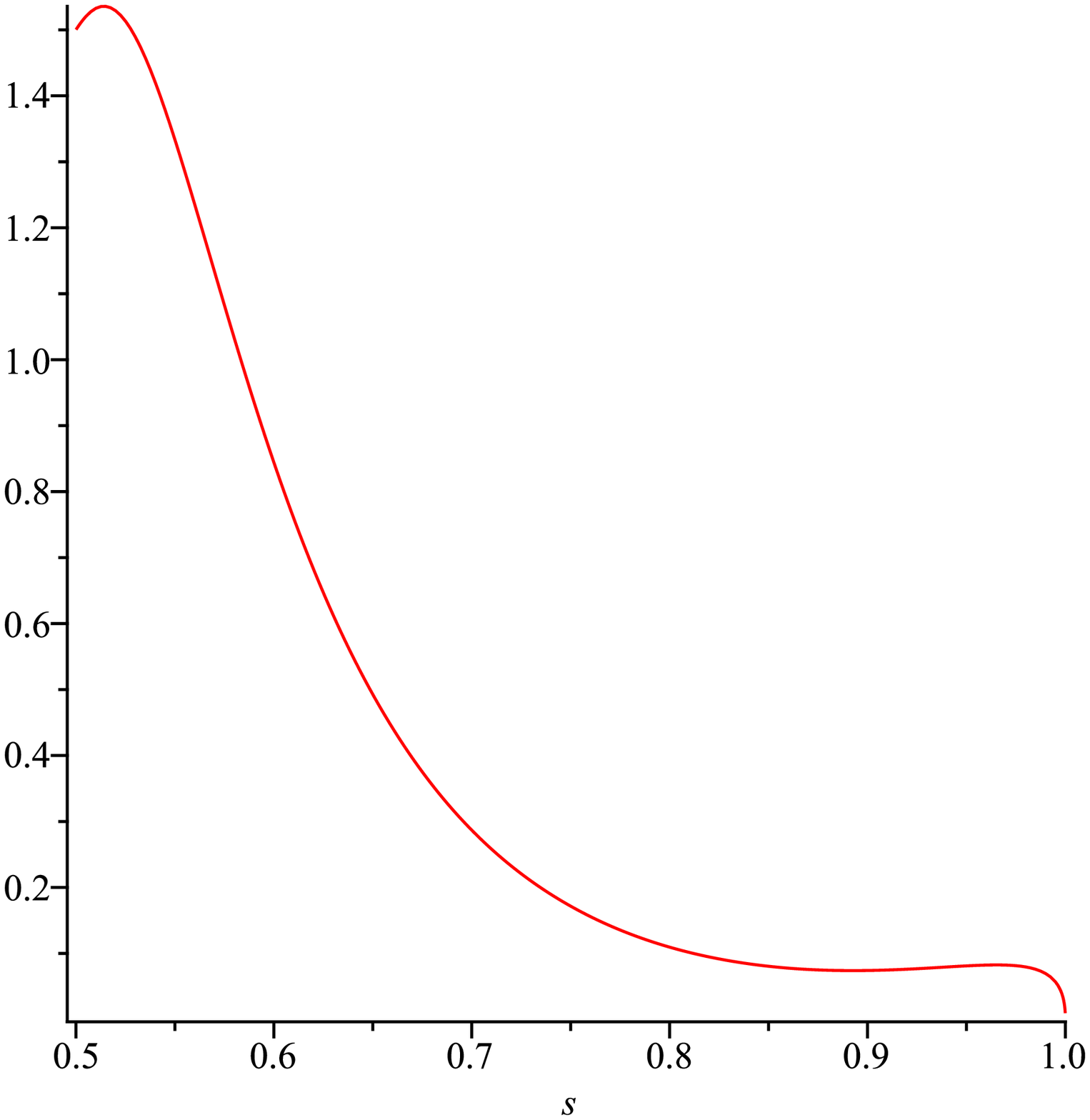}\\
\includegraphics[width=0.9\columnwidth]{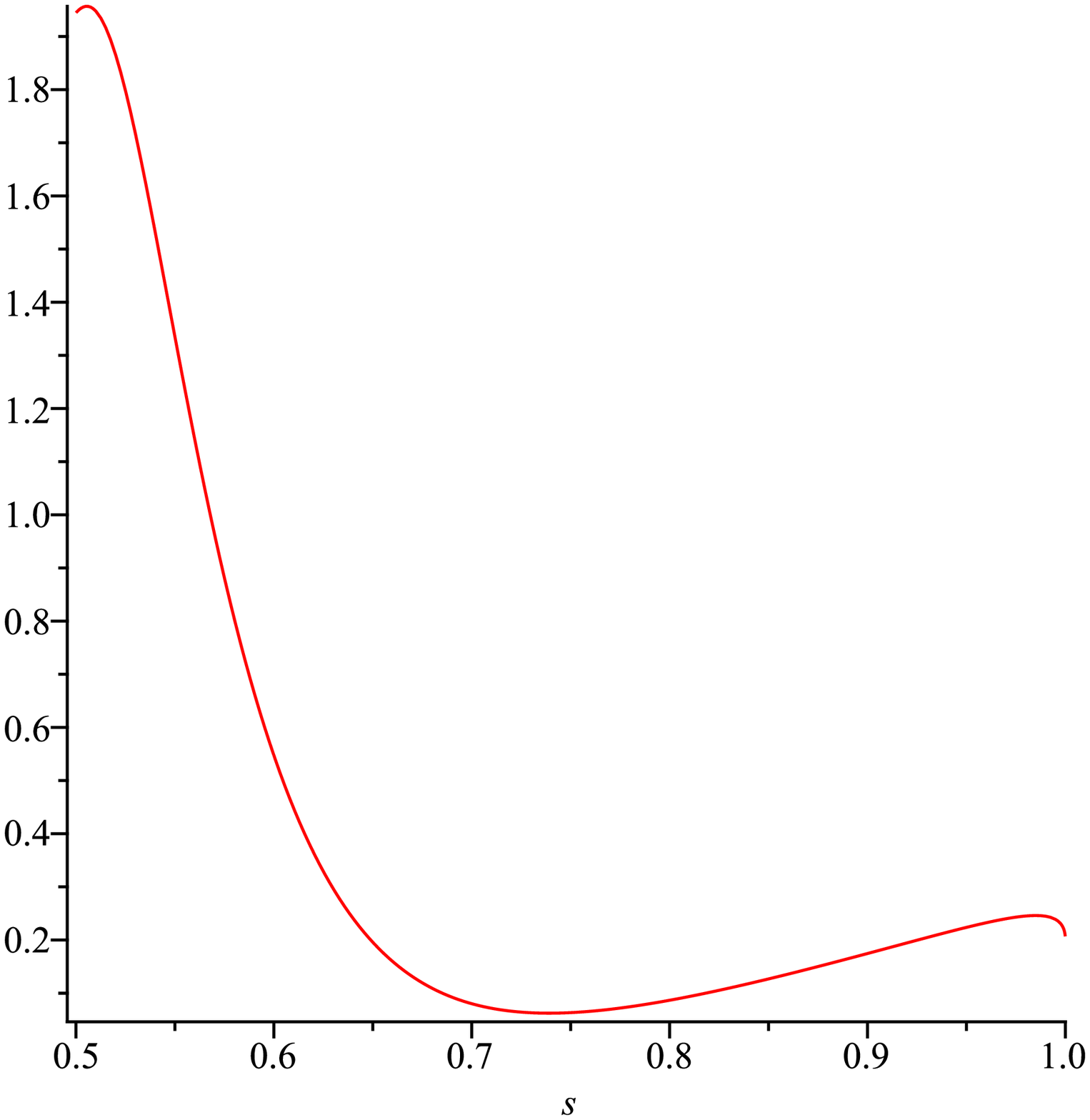}
\end{minipage}
\begin{minipage}{.49\textwidth}
\includegraphics[width=0.9\columnwidth]{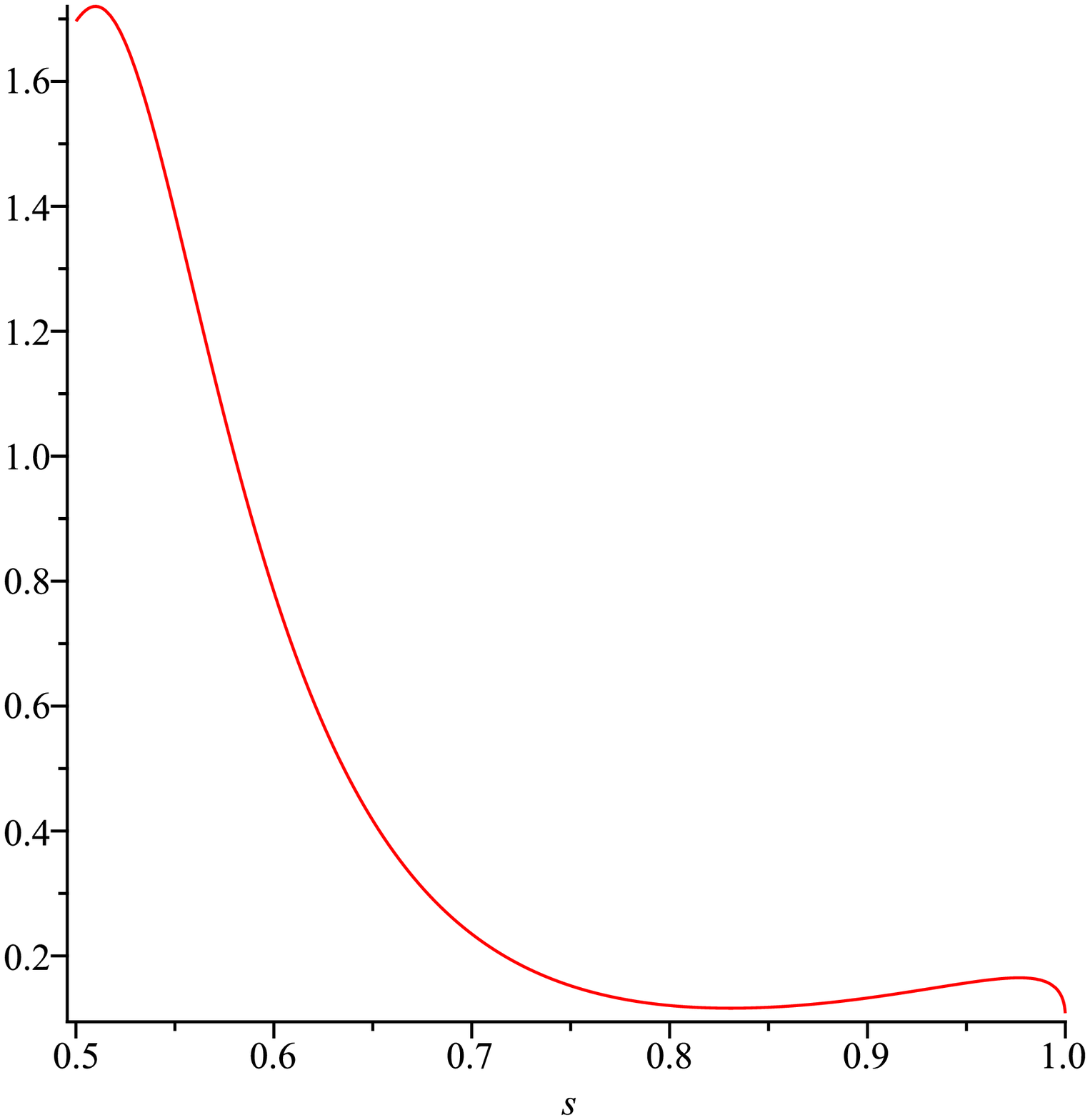}\\
\includegraphics[width=0.9\columnwidth]{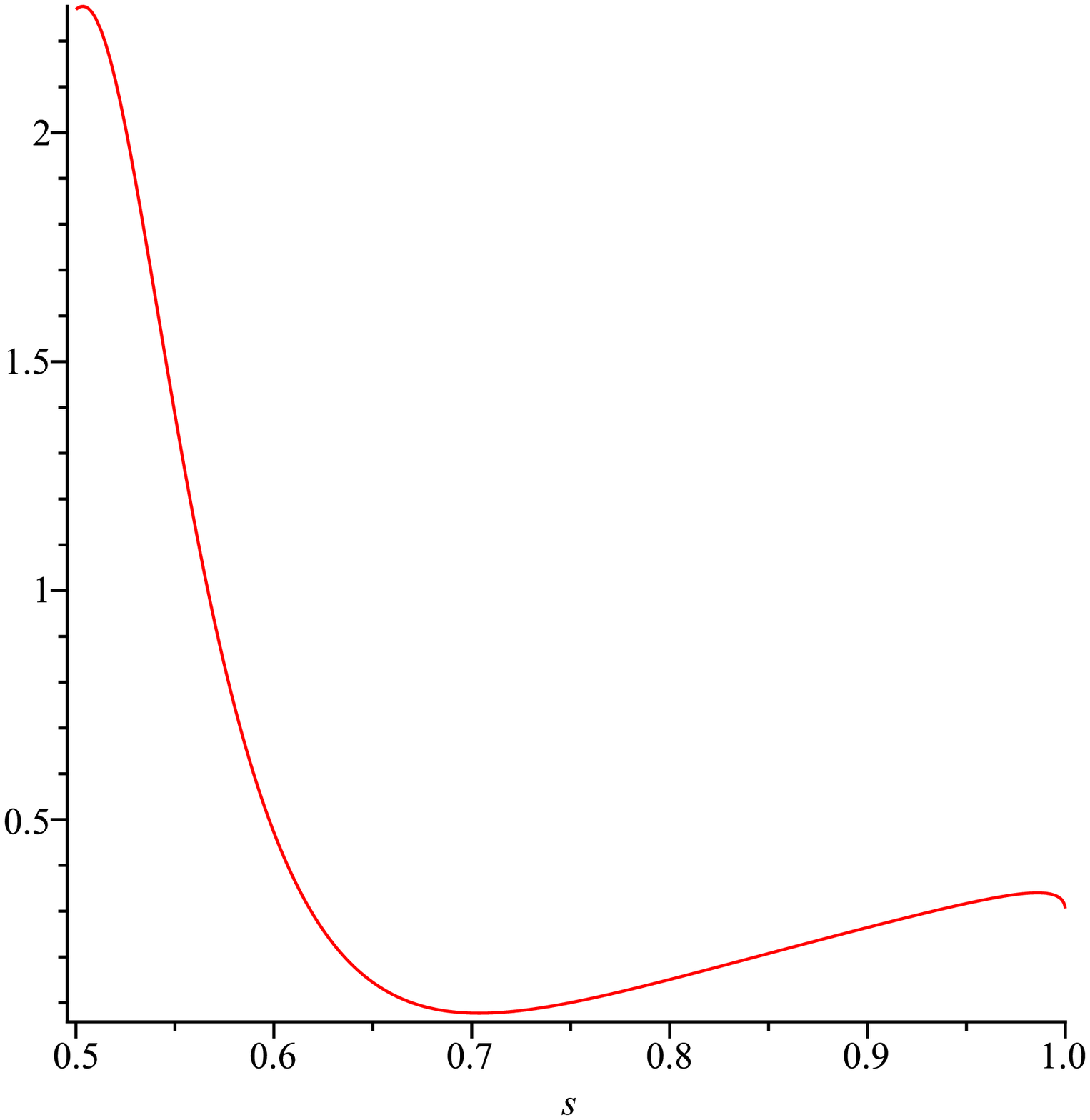}
\end{minipage}
\caption{A graph of $q_{a,b}(s,x_{a,b}(s))$ for $\frac12<s<1$ with $(a,b)=(0.9,1)$---top left---, $(a,b)=(0.8,0.9)$---top right---, $(a,b)=(0.7,8)$---bottom left---, and $(a,b)=(0.6,0.7)$---bottom right.}\label{plot-of-ms1}
\end{figure}

\section{The higher-dimensional case}\label{n s}

In the following we test the validity of Theorem \ref{main theorem} for $2\leq 12$. In view of Lemma \ref{lem:midconcavity}, it remains to show that, for $x\in(x_*(n,s),1)$ and $x_*$ as in Lemma \ref{lem:midconcavity}, it holds
\begin{multline}\label{q-bis-higherA}
\int_{-1}^1(1-t^2)^{\frac{n-3}{2}}\Bigg(x^{4s-2-n}-\frac{1}{(x^2+1-2xt)^{\frac{n}{2}+1-2s}}\Bigg)\;dt\\
+M(n,s)\int_{1}^{\infty}\frac{1}{\big(r^2-1\big)^s\,r}\int_{-1}^1  \frac{(1-t^2)^{\frac{n-3}{2}}}{(x^2+r^2-2xrt)^{\frac{n}{2}-s+1}}\;dt\;dr\geq 0
\end{multline}
with (recall \eqref{poisson}, \eqref{Ftau}, and \eqref{bigger lambda})
\begin{align*}
M(n,s) &:=
\frac{2\pi^{\frac{n}{2}}}{\Gamma(\frac{n}{2})} \frac{\gamma(n,s)\,\kappa(n,s-1)}{\Lambda(n,s)\,\kappa(n,2s-1)} \\
&=
\frac{2}{\Gamma(s)\,\Gamma(1-s)} 
\frac{(\frac{n}{2}+s)\,\Gamma(\frac{n}2)\,\Gamma(1+2s)}{2^{2s}\Gamma(1+s)^2\,\Gamma(\frac{n}{2}+1+2s)} 
\frac{\Gamma(\frac{n}2+1-s)}{2^{2s-3}\big|\Gamma(s-1)\big|}
\frac{2^{4s-3}\Gamma(2s-1)}{\Gamma(\frac{n}2+1-2s)} \\
&=\frac{(1-s)\Gamma(1+2s)\Gamma(2s-1)}{s^2\Gamma(s)^4\Gamma(1-s)}\cdot\frac{(n+2s)\Gamma(\frac{n}{2}) \Gamma(\frac{n}{2}+1-s)}{\Gamma(\frac{n}{2}+1+2s)\Gamma(\frac{n}{2}+1-2s)}
\end{align*}
where we have used Lemma \ref{lem:integral computation}, the transformation into polar coordinates, and some reformulations of the constant using properties of the Gamma function.
Note here, that
\[
\int_{-1}^1(1-t^2)^{\frac{n-3}{2}}\ dt=\int_{0}^1\tau^{-\frac{1}{2}}(1-\tau)^{\frac{n-3}{2}}\ d\tau=\frac{\sqrt{\pi}\Gamma(\frac{n-1}{2})}{\Gamma(\frac{n}{2})}=\frac{2^{n-2}\Gamma(\frac{n-1}{2})^2}{\Gamma(n-1)}
\]
and with the transformation $t+1=2\tau$ we have
\begin{align}
\int_{-1}^1\frac{(1-t^2)^{\frac{n-3}{2}}}{\big(x^2+1-2xt\big)^{\frac{n}{2}+1-2s}}\;dt
&=2\int_{0}^1\frac{(2-2\tau)^{\frac{n-3}{2}}(2\tau)^{\frac{n-3}{2}}}{\big(x^2+1-2x(2\tau-1)\big)^{\frac{n}{2}+1-2s}}\;dt\notag\\
&=2^{n-2}\int_{0}^1\frac{(1-\tau)^{\frac{n-3}{2}}\tau^{\frac{n-3}{2}}}{\big((x+1)^2-4x\tau\big)^{\frac{n}{2}+1-2s}}\;dt\notag\\
&=\frac{2^{n-2}}{(x+1)^{n+2-4s}}\int_{0}^1\tau^{\frac{n-3}{2}}(1-\tau)^{\frac{n-3}{2}}\Big(1-\frac{4x}{(x+1)^2}\tau\Big)^{2s-\frac{n}{2}-1}\;dt\notag\\
&=\frac{2^{n-2}}{(x+1)^{n+2-4s}}\frac{\Gamma(\frac{n-1}{2})^2}{\Gamma(n-1)}\hf\bigg(\frac{n}{2}+1-2s,\frac{n-1}{2};n-1\Big|\frac{4x}{(x+1)^2}\bigg),\label{integral A}
\end{align}
where we have used \eqref{hyper def}. Similarly, we also have 
\begin{align}
\int_{-1}^1  \frac{(1-t^2)^{\frac{n-3}{2}}}{\big(x^2+r^2+2xrt\big)^{\frac{n}{2}-s+1}}\;\ dt&=\int_{-1}^1  \frac{(1-t^2)^{\frac{n-3}{2}}}{\big(x^2+r^2-2xrt\big)^{\frac{n}{2}-s+1}}\;dt\notag\\
&=\frac{2^{n-2}}{(x+r)^{n+2-2s}}\int_{0}^1\tau^{\frac{n-3}{2}}(1-\tau)^{\frac{n-3}{2}}\Big(1-\frac{4xr}{(x+r)^2}\tau\Big)^{ s-\frac{n}{2}-1}\;dt\notag\\
&=\frac{2^{n-2}}{(x+r)^{n+2-2s}}\frac{\Gamma(\frac{n-1}{2})^2}{\Gamma(n-1)}\hf\bigg(\frac{n}{2}+1-s,\frac{n-1}{2};n-1\Big|\frac{4xr}{(x+r)^2}\bigg).\label{integral B}
\end{align}
We now perform some transformations on the hypergeometric functions appearing respectively in \eqref{integral A} and \eqref{integral B}. For this, let $\sigma=1-2s$ or $\sigma=1-s$ and let $t=1$ or $t=r$ (so that $t\geq 1$). Then note that
\[
\frac{n}{2}+\sigma-\frac{n-1}{2}=\frac{n}{2}+\sigma +\frac{n-1}{2}-(n-1)
\qquad\text{and}\qquad
\sqrt{1-\frac{4xt}{(x+t)^2}}=\frac{t-x}{t+x},
\]
so that, by \eqref{hyper identity 3}, we have
\begin{align*}
&\hf\bigg(\frac{n}{2}+\sigma,\frac{n-1}{2};n-1\Big|\frac{4xt}{(x+t)^2}\bigg)=\\
&=2^{2\sigma+n}\Big(1+\frac{t-x}{t+x}\Big)^{-2\sigma-n}\hf\bigg(\frac{n}{2}+\sigma,\frac{n}{2}+\sigma-\frac{n-1}{2}+\frac12;\frac{n-1}{2}+\frac12\Big|\Big(\frac{1-\frac{t-x}{t+x}}{1+\frac{t-x}{t+x}}\Big)^2\bigg)\\
&=\Big(\frac{t}{t+x}\Big)^{-2\sigma-n}\hf\bigg(\frac{n}{2}+\sigma,1+\sigma;\frac{n}{2}\Big| \frac{x^2}{t^2}\bigg).
\end{align*}
With this, \eqref{q-bis-higherA} translates to 
\begin{multline}\label{q-bis-higherB}
x^{4s-2-n}-\hf\bigg(\frac{n}{2}+1-2s,2-2s;\frac{n}{2}\Big|x^2\bigg) \\
+M(n,s)\int_1^\infty\frac{r^{2s-n-3}}{(r^2-1)^s } \hf\Big(\frac{n}{2}+1-s,2-s;\frac{n}{2}\Big| \frac{x^2}{r^2}\Big)\;dr\geq 0.
\end{multline}
We exploit next the series expansion of the hypergeometric function, see \eqref{series def}.
We have, due to the absolute convergence of the integral and the involved infinite sum,
\begin{align*}
& \int_1^\infty\frac{r^{2s-n-3}}{(r^2-1)^s}\hf\Big(\frac{n}{2}+1-s,2-s;\frac{n}{2}\Big| \frac{x^2}{r^2}\Big)\;dr=\\
& =\frac{\Gamma\big(\frac{n}2\big)}{\Gamma\big(\frac{n}{2}+1-s\big)\,\Gamma(2-s)}\sum_{k=0}^\infty
\frac{\Gamma\big(\frac{n}{2}+1-s+k\big)\,\Gamma(2-s+k)}{\Gamma\big(\frac{n}2+k\big)}\frac{x^{2k}}{k!}
\int_1^\infty\frac{r^{2s-n-3-2k}}{(r^2-1)^s}\;dr \\
& =\frac{\Gamma\big(\frac{n}2\big)}{2\Gamma\big(\frac{n}{2}+1-s\big)\,\Gamma(2-s)}\sum_{k=0}^\infty
\frac{\Gamma\big(\frac{n}{2}+1-s+k\big)\,\Gamma(2-s+k)}{\Gamma\big(\frac{n}2+k\big)}\frac{x^{2k}}{k!}
\int_0^1\frac{\rho^{n/2+k}}{(1-\rho)^s}\;d\rho \\
& =\frac{\Gamma\big(\frac{n}2\big)}{2\Gamma\big(\frac{n}{2}+1-s\big)\,\Gamma(2-s)}\sum_{k=0}^\infty
\frac{\Gamma\big(\frac{n}{2}+1-s+k\big)\,\Gamma(2-s+k)}{\Gamma\big(\frac{n}2+k\big)}\frac{x^{2k}}{k!}
\frac{\Gamma\big(\frac{n}2+k+1\big)\,\Gamma(1-s)}{\Gamma\big(\frac{n}2+k+2-s\big)} \\
& =\frac{\Gamma\big(\frac{n}2\big)}{2\Gamma\big(\frac{n}{2}+1-s\big)\,(1-s)}\sum_{k=0}^\infty
\frac{\big(\frac{n}2+k\big)\Gamma(2-s+k)}{\frac{n}2+k+1-s}\frac{x^{2k}}{k!} 
\end{align*}
where we have used a change of variables with $\rho=r^{-2}$ and \eqref{beta-function}.
As it holds
\begin{align}\label{898989898}
\frac{n}{n+2-2s}\leq\frac{\frac{n}2+k}{\frac{n}2+k+1-s}\leq 1
\qquad \text{for any }n\in\N,\ k\in\N\cup\{0\},\ s\in\Big(\frac12,1\Big),
\end{align}
we deduce from the above calculation that
\[
\frac{\Gamma\big(\frac{n}2+1\big)\,\Gamma(1-s)}{2\Gamma\big(\frac{n}{2}+2-s\big)}\big(1-x^2\big)^{s-2}
\leq
\int_1^\infty\frac{r^{2s-n-3}}{(r^2-1)^s}\hf\Big(\frac{n}{2}+1-s,2-s;\frac{n}{2}\Big| \frac{x^2}{r^2}\Big)\;dr
\leq
\frac{\Gamma\big(\frac{n}2\big)\,\Gamma(1-s)}{2\Gamma\big(\frac{n}{2}+1-s\big)}\big(1-x^2\big)^{s-2}
\]
Indeed, it holds
\[
\sum_{k=0}^\infty\frac{\Gamma(2-s+k)}{\Gamma(2-s)}\frac{x^{2k}}{k!}
=\hf\big(2-s,1;1\big|x^2\big)
=\big(1-x^2\big)^{s-2},
\]
see \eqref{hyper identity 1}. Hence, \eqref{q-bis-higherB} is satisfied once we have
\begin{align}\label{q-bis-higherC}
x^{4s-2-n}-\hf\bigg(\frac{n}{2}+1-2s,2-2s;\frac{n}{2}\Big|x^2\bigg) +\frac{M(n,s)\Gamma\big(\frac{n}2+1\big)\,\Gamma(1-s)}{2\Gamma\big(\frac{n}{2}+2-s\big)}\big(1-x^2\big)^{s-2}\geq 0.
\end{align}
In Figure \ref{fig:low-dim} we present the plots of the left-hand side of \eqref{q-bis-higherC} for $n=2,\ldots,11$, where this is indeed positive.

\begin{figure}\label{fig:low-dim}
\centering
\begin{minipage}{.49\textwidth}
\centering
\includegraphics[width=.8\columnwidth]{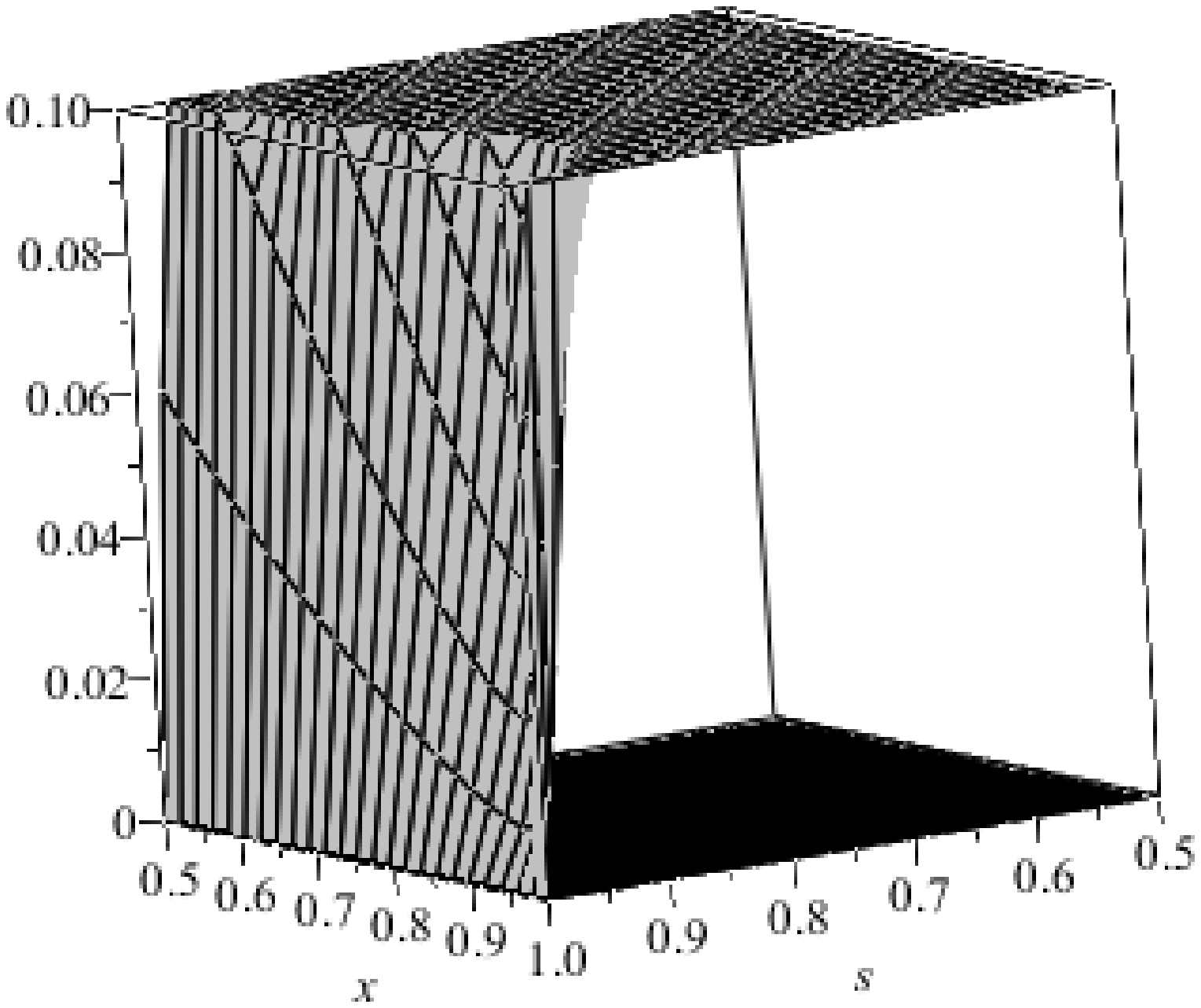}\\
\includegraphics[width=.8\columnwidth]{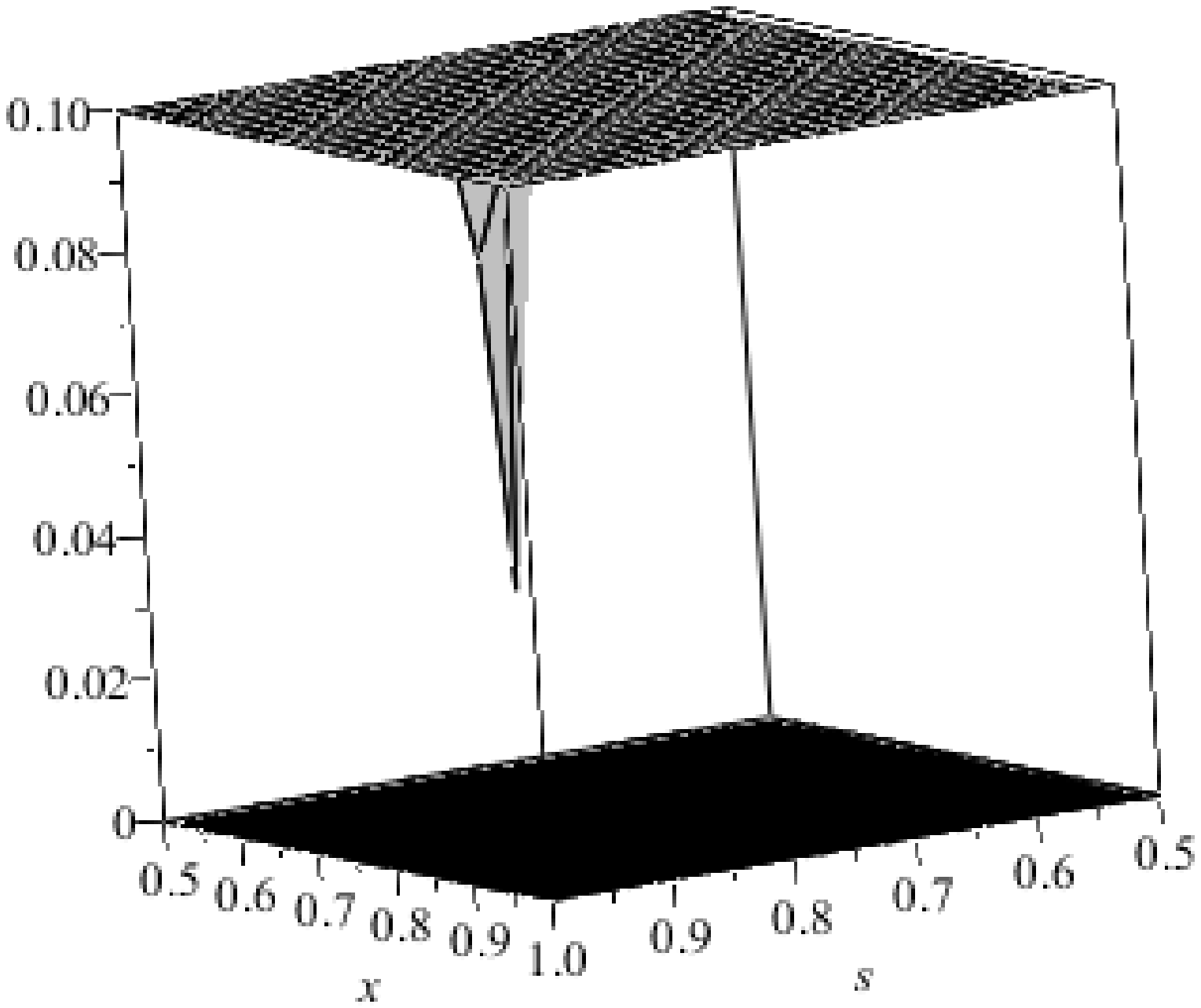}\\
\includegraphics[width=.8\columnwidth]{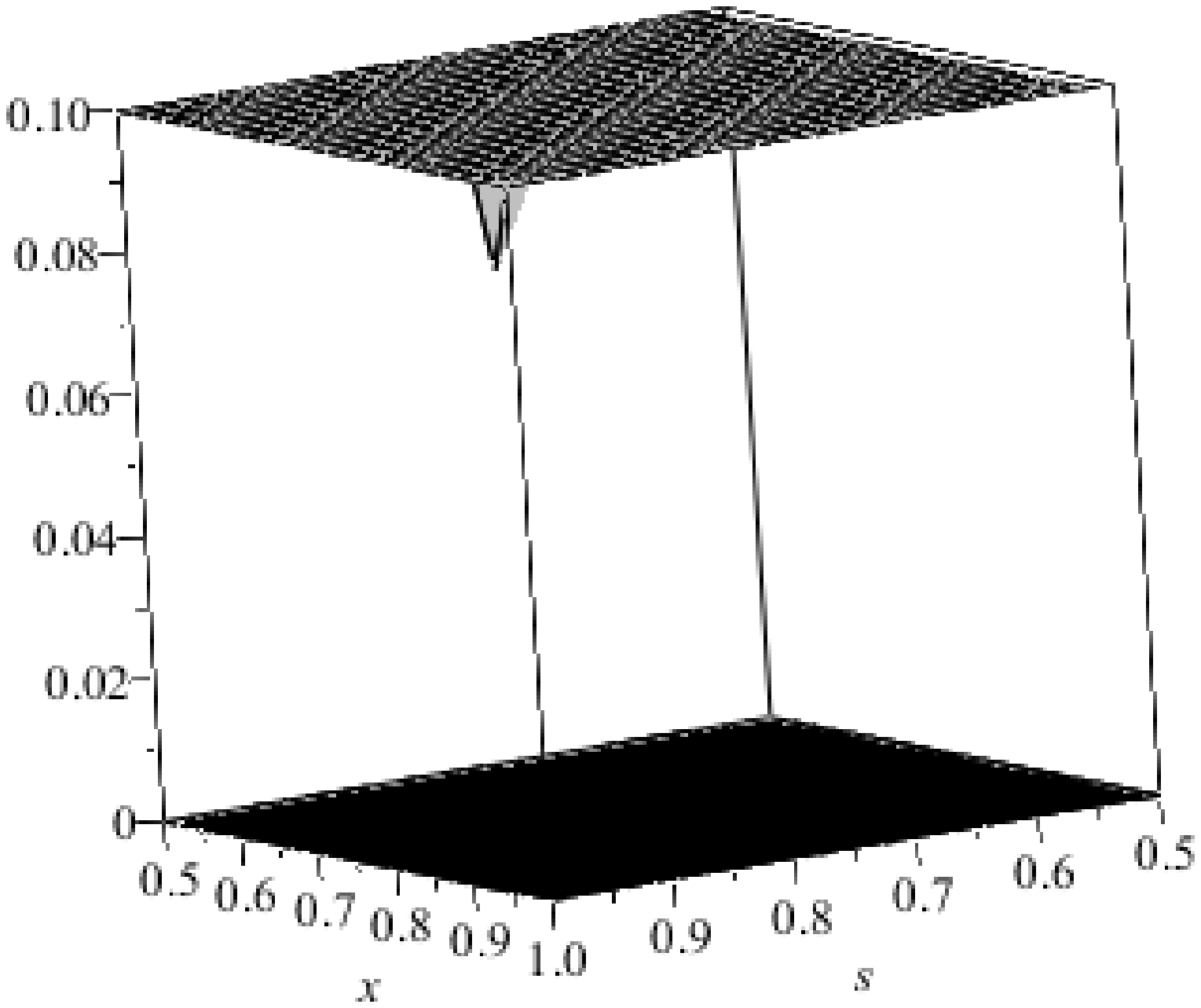}\\
\end{minipage}
\begin{minipage}{.49\textwidth}
\centering
\includegraphics[width=.8\columnwidth]{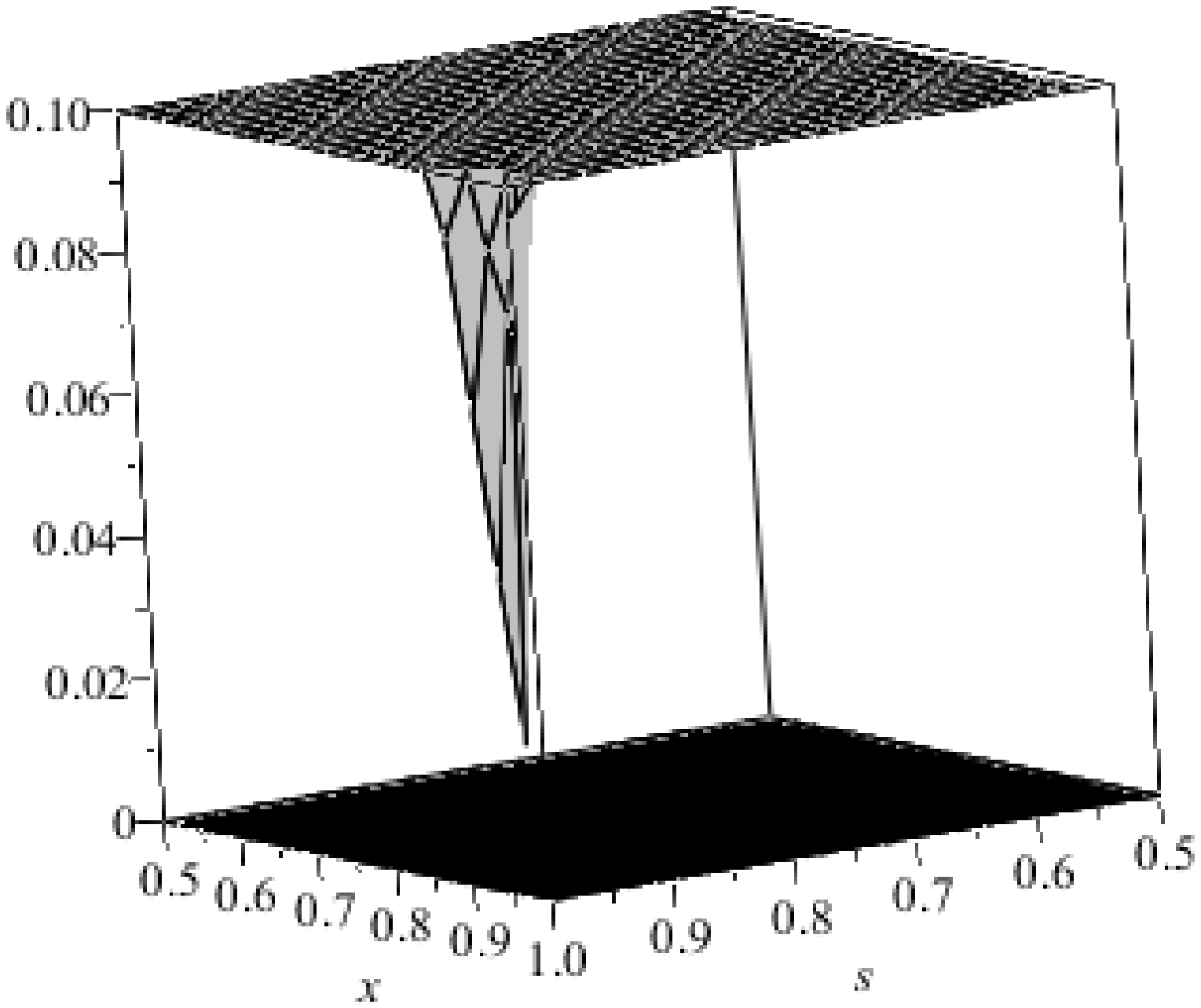}\\
\includegraphics[width=.8\columnwidth]{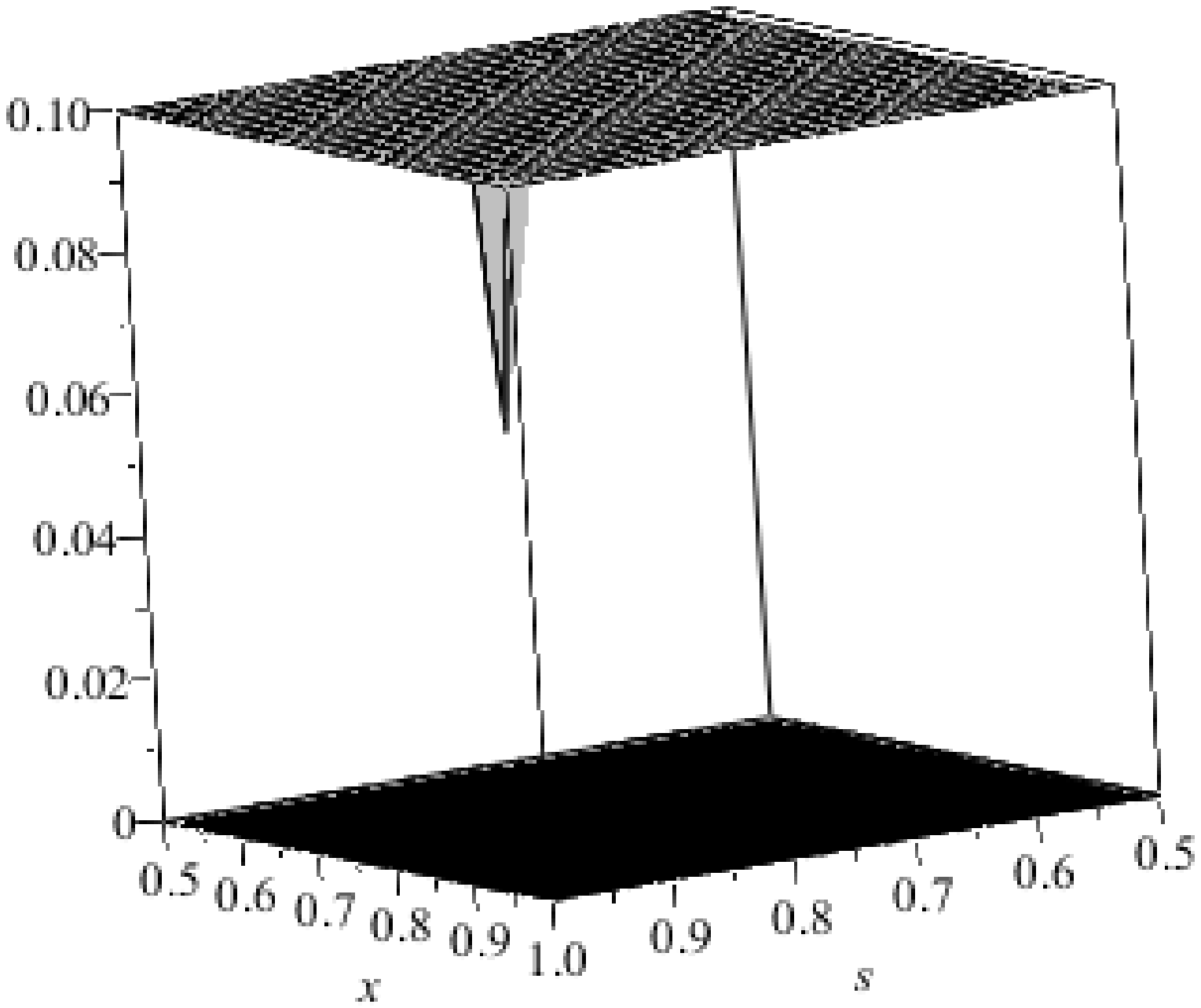}\\
\includegraphics[width=.8\columnwidth]{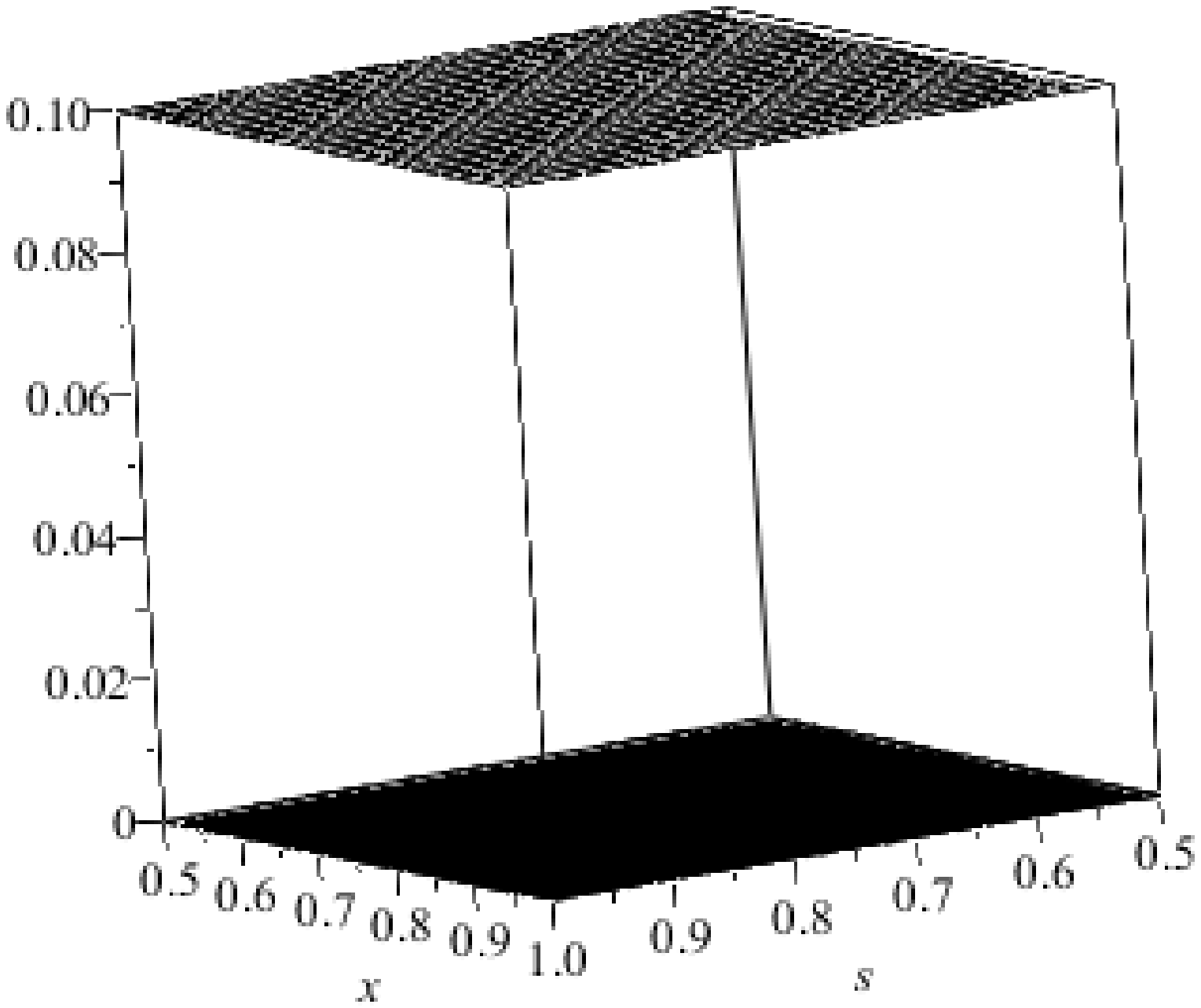}
\end{minipage}
\caption[Certain plots 
for $n=2$ (top left), $n=3$ (top right), $n=4$ (center left), $n=5$ (center right), $n=6$ (bottom left), and $n=7,\ldots,11$ (bottom right).]{The plots of 
\begin{align*}
\Big(\frac12,1\Big)\times\Big(\frac12,1\Big) & \longrightarrow \R \\
(s,x) & \longmapsto \min\Bigg\{\frac1{10},x^{4s-2-n}-\hf\bigg(\frac{n}{2}+1-2s,2-2s;\frac{n}{2}\Big|x^2\bigg) \\
&\qquad\qquad\qquad +\frac{M(n,s)\Gamma\big(\frac{n}2+1\big)\,\Gamma(1-s)}{2\Gamma\big(\frac{n}{2}+2-s\big)}\big(1-x^2\big)^{s-2}\Bigg\}
\end{align*}
for $n=2$ (top left), $n=3$ (top right), $n=4$ (center left), $n=5$ (center right), $n=6$ (bottom left), and $n=7,\ldots,11$ (bottom right).
}
\end{figure}

\begin{remark}
By avoiding estimate \eqref{898989898} and keeping the series expansion of the hypergeometric function, see \eqref{series def}, it is possible to see that also the case $n=12$ is actually covered by this approach. However, for larger $n$ this keeps failing, although there always are some ranges of $s$ where the left-hand side of \eqref{q-bis-higherB} stays positive. Finally, let us mention that for $n\geq 127$, the left-hand side of \eqref{q-bis-higherC} seems to be positive again. Indeed, again with the series expansion of the hypergeometric function, see \eqref{series def}, it holds for $n\geq 4$
\[
\hf\bigg(\frac{n}{2}+1-2s,2-2s;\frac{n}{2}\Big|x^2\bigg) \leq \sum_{k=0}^{\infty}\frac{\Gamma(2-2s+k)}{\Gamma(2-2s)}\frac{x^{2k}}{k!}=\big(1-x^2\big)^{2s-2},
\]
see \eqref{hyper identity 1}. So that it remains to check 
\begin{equation}\label{q-bis-higherD}
x^{4s-2-n}-\big(1-x^2\big)^{2s-2} +\frac{M(n,s)\Gamma\big(\frac{n}2+1\big)\,\Gamma(1-s)}{2\Gamma\big(\frac{n}{2}+2-s\big)}\big(1-x^2\big)^{s-2}\geq 0,
\end{equation}
which seems to stay positive for $n\geq 127$. See Figure \ref{more plots}. 
Note indeed that the left-hand side of \eqref{q-bis-higherD} is greater than
\[
x^{4s-2-n}-\big(1-x^2\big)^{2s-2}
\]
which diverges to $+\infty$ as $n\uparrow\infty$ for $s,x\in(\frac12,1)$.

Let us mention here that our strategy strongly relies also on estimate \eqref{new bound on lambda} and that a more precise estimate here could improve a lot the number of dimensions covered in our analysis.
\end{remark}

\begin{figure}
\centering
\includegraphics[width=.45\columnwidth]{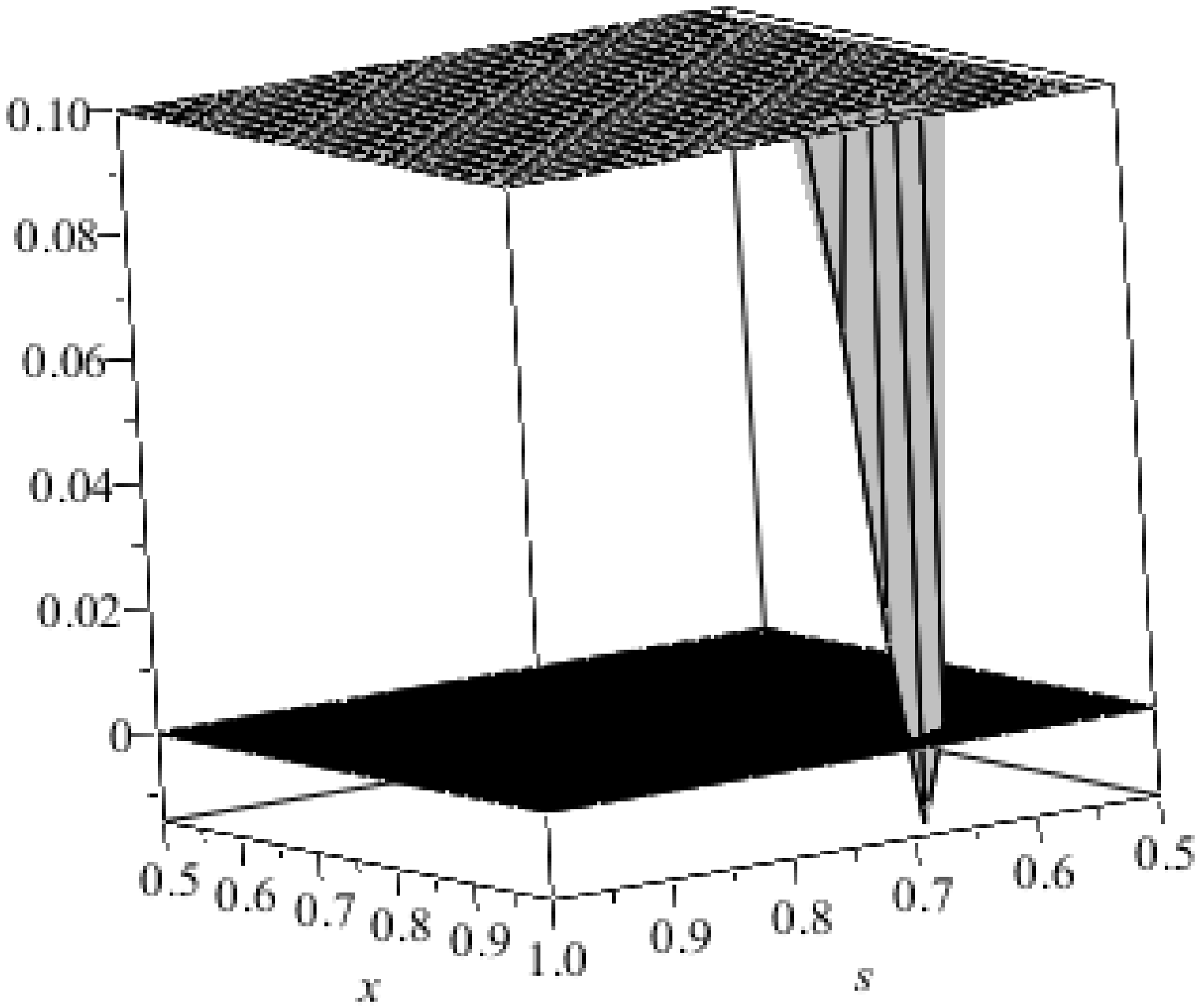}
\includegraphics[width=.45\columnwidth]{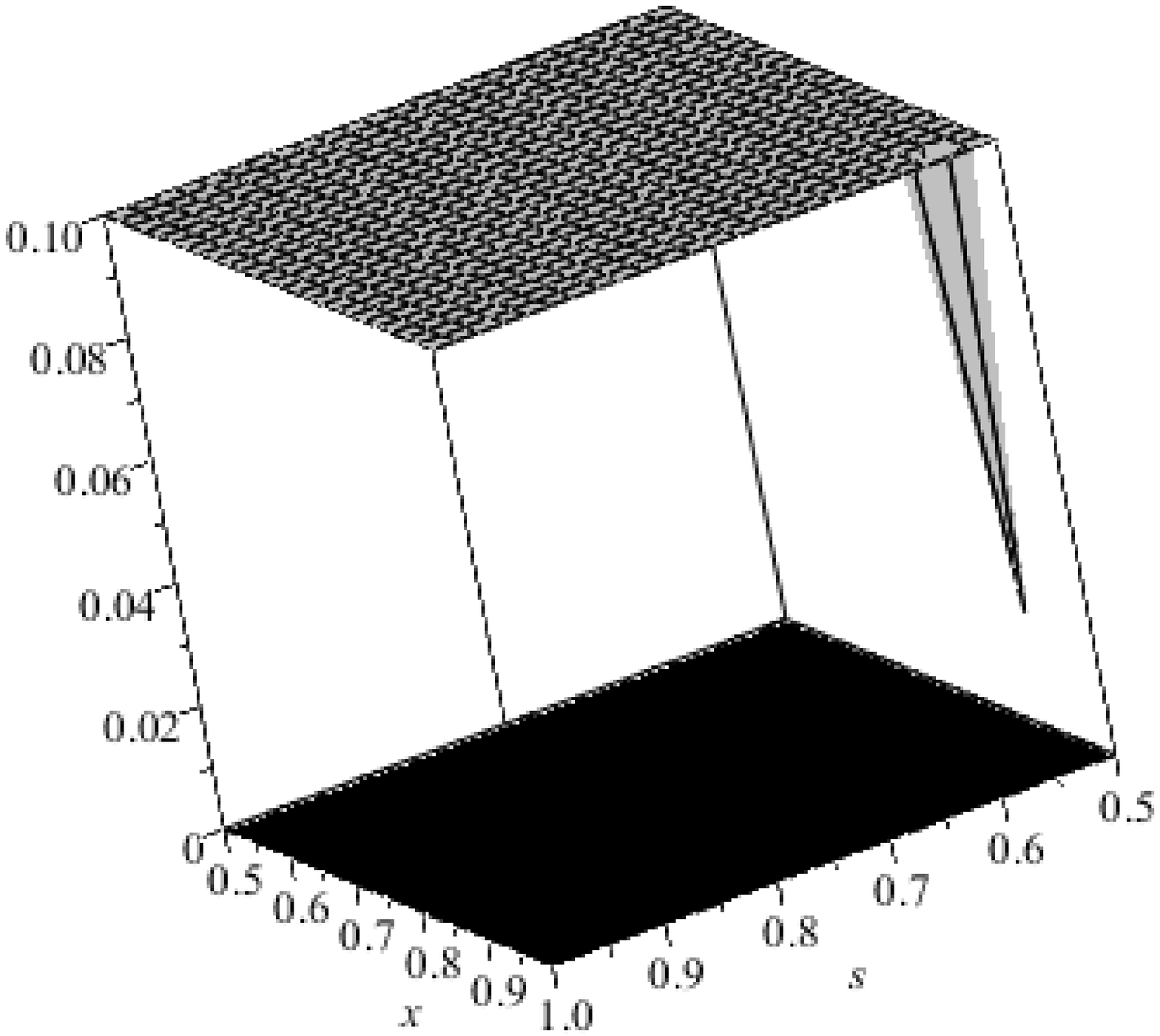}
\caption{On the left, \eqref{q-bis-higherC} fails for $n=12$. On the right, \eqref{q-bis-higherC} is recovered for $n=127$ via the weaker condition \eqref{q-bis-higherD}.}\label{more plots}
\end{figure}

\appendix

\section{Special functions}
\label{special functions}

For the reader's convenience we list here the definitions and some properties about the special functions that we use.

\subsection{The Gamma function} 
As usual, the Gamma function is defined by 
\[
\Gamma(z)=\int_0^{\infty}x^{z-1}e^{-x}\;dx, \qquad\text{for }z>0.
\]
As it satisfies the recursive formula
\begin{align*}
\Gamma(z+1)=z\,\Gamma(z)
\end{align*}
its definition can be extended using this formula to $z\in \R\setminus\{0,-1,-2,\ldots\}$. 
The Gamma function satisfies in particular the duplication formula (see, \textit{e.g.}, \cite{abramowitz}*{equation 6.1.18})
\begin{align}\label{duplicate}
\sqrt{\pi}\,\Gamma(2z)=2^{2z-1}\Gamma(z)\,\Gamma\Big(z+\frac12\Big)
\qquad\text{for }z>0.
\end{align}
Moreover, it holds (\textit{e.g.}, \cite{abramowitz}*{equation 6.1.17})
\begin{align*}
\Gamma(z)\Gamma(1-z)=\frac{\pi}{\sin(\pi z)}
\qquad\text{for }z\in\R\setminus\mathbb{Z}.
\end{align*}
Furthermore (\textit{e.g.}, \cite{abramowitz}*{equation 6.2.1}),
\begin{align}\label{beta-function}
\int_0^1 t^{z-1}(1-t)^{w-1}\;dt=\int_0^{\infty}\frac{t^{z-1}}{(1+t)^{z+w}}\;dt=\frac{\Gamma(z)\,\Gamma(w)}{\Gamma(z+w)}, \qquad z,w>0.
\end{align}

\subsection{The hypergeometric function} We collect here some facts about the hypergeometric function $\hf$. We suppose in all the following that $a,b,c,z\in\R$ with $c>b>0$ and $z\in[0,1)$, although some formulas might hold in broader generality (we refer to \cite{abramowitz}*{Chapter 15}).

Recall first the integral representation
\begin{align}\label{hyper def}
\hf(a,b;c|z)=\frac{\Gamma(c)}{\Gamma(b)\,\Gamma(c-b)}\int_0^1 t^{b-1}(1-t)^{c-b-1}(1-zt)^{-a}\;dt,
\end{align}
see \cite{abramowitz}*{equation 15.3.1}, and the series expansion
\begin{align}\label{series def}
\hf(a,b;c|z)=\frac{\Gamma(c)}{\Gamma(a)\Gamma(b)}\sum_{k=0}^{\infty} \frac{\Gamma(a+k)\Gamma(b+k)}{\Gamma(c+k)}\frac{z^k}{k!},
\end{align}
see \cite{abramowitz}*{equation 15.1.1}.

In particular one can consider $-a\in\N\cup\{0\}$, in which case one has that $\hf$ reduces to a polynomial of degree $-a$. For example:
\begin{align}
\hf(0,b;c|z)&=1, \label{hyper polyn 0}\\
\hf(-1,b;c|z)&=1-\frac{b}{c}\,z, \label{hyper polyn 1}
\end{align} 
see \cite{abramowitz}*{equation 15.4.1}.

Among the many possible transformations, the following one is important to our purposes:
\begin{align}\label{hyper linear transf}
\hf(a,b;c|z)=(1-z)^{c-a-b}\hf(c-a,c-b;c|z).
\end{align}
Indeed, \eqref{hyper linear transf} alongside \eqref{hyper polyn 0} and \eqref{hyper polyn 1} respectively, bears the following identities (corresponding to the particular cases $c=a$ and $c=a-1$ respectively):
\begin{align}
\hf(a,b;a|z) &= (1-z)^{-b} & \text{if }a>b>0, \label{hyper identity 1} \\
\hf(a,b;a-1|z) &= (1-z)^{-b-1}\Big(1-\frac{a-b-1}{a-1}\,z\Big) \label{hyper identity 2}
& \text{if }a-1>b>0.
\end{align}

Finally, according to \cite{abramowitz}*{formula 15.3.17},
\begin{align}\label{hyper identity 3}
\hf(a,b;2b|z) = 
2^{2a}\big(1+\sqrt{1-z}\big)^{-2a}\hf\bigg(a,a-b+\frac12;b+\frac12\Big|\Big(\frac{1-\sqrt{1-z}}{1+\sqrt{1+z}}\Big)^2\bigg).
\end{align}

\section{A bound on the first eigenvalue}
\label{bound first eigenvalue}

Let $\lambda$ be the first eigenvalue of $(-\Delta)^s$ in $B_1$. A direct bound in terms the first eigenvalue $\lambda_1$ of the classical Dirichlet Laplacian $-\Delta$ on the same ball is given by
\[
\lambda\leq \big(\lambda_1\big)^s,
\]
see \cite{MR3233760}*{Theorem 1.1} or, also, \cites{MR2158176,MR3246044}.
To have a more explicit estimate---which turns out to be a better one for $s$ away from $1$ and $n=1$---recall that the function $u_1\in C^s(\R^n)$, $u_1(x)=\kappa_{n,s}(1-|x|^2)^s_+$, where
\[
\kappa_{n,s}=\frac{\Gamma(n/2)4^{-s}}{\Gamma(1+s)\Gamma(s+\frac{n}{2})},
\]
satisfies $(-\Delta)^s u_1=1$ in $B_1$. In particular, we have
\[
\lambda=\min_{\substack{u\in \cH^s_0(B_1)\\ u\neq 0}} \frac{[u]^2_s}{\|u\|_{L^2(B_1)}^2}\leq \frac{[u_1]^2_s}{\|u_1\|_{L^2(B_1)}^2}.
\]
Here,
\begin{align*}
[u_1]^2_s&=\int_{B_1}u_1(x)\;dx=\kappa_{n,s}\frac{2\pi^{n/2}}{\Gamma(\frac{n}2)}\int_{0}^{1}(1-r^2)^s\,r^{n-1}\;dr\\&=\kappa_{n,s}\frac{\pi^{n/2}}{\Gamma(\frac{n}2)}\int_{0}^{1}(1-t)^s\,t^{\frac{n}{2}-1}\;dt=\kappa_{n,s}\pi^{n/2}\frac{\Gamma(1+s)}{\Gamma(1+s+\frac{n}2)}, \\
\|\tau\|_{L^2(B_1)}^2&=\kappa_{n,s}^2\frac{2\pi^{n/2}}{\Gamma(\frac{n}2)}\int_0^1(1-r^2)^{2s}\,r^{n-1}\;dr=\kappa_{n,s}^2\frac{\pi^{n/2}}{\Gamma(\frac{n}2)}\int_0^1(1-t)^{2s}\,t^{\frac{n}{2}-1}\ dt\\
&=\kappa_{n,s}^2\pi^{n/2}\frac{\Gamma(1+2s)}{\Gamma(1+2s+\frac{n}2)},
\end{align*}
where we used \eqref{beta-function} twice.
Thus,
\begin{equation}\label{new bound on lambda}
\lambda\leq \frac{1}{\kappa_{n,s}}\frac{\Gamma(1+s)\Gamma(1+2s+\frac{n}2)}{\Gamma(1+2s)\Gamma(1+s+\frac{n}2)}=\frac{4^s\Gamma(1+s)^2\,\Gamma(1+2s+\frac{n}{2})}{(s+\frac{n}{2})\,\Gamma(\frac{n}2)\,\Gamma(1+2s)}=\Lambda(n,s).
\end{equation}
In the particular case $n=1$, we have with the properties of the Gamma function (see Appendix \ref{special functions}, in particular \eqref{duplicate})
\begin{align}\label{new bound on lambda n=1}
\lambda\leq\frac{4^s\Gamma(1+s)^2\,\Gamma(\frac32+2s)}{(\frac12+s)\,\Gamma(\frac12)\,\Gamma(1+2s)}
=\frac{s4^s\Gamma(s)^2\,\Gamma(\frac32+2s)}{\sqrt\pi(1+2s)\,\Gamma(2s)}
=\frac{2s\Gamma(s)\,\Gamma(\frac32+2s)}{(1+2s)\,\Gamma(\frac12+s)}
=\frac{\Gamma(1+s)\,\Gamma(\frac32+2s)}{\Gamma(\frac32+s)}.
\end{align}
 
Related results in this direction are contained in Dyda, Kuznetsov, and Kwa\'snicki \cite{MR3656279}.

\section{On the shape of some \texorpdfstring{$s$}{s}-harmonic functions}
\label{minimum at 0}

We discuss here some features of $s$-harmonic functions in $B_1$ associated with particular data in $B_1^c$.
Specifically, we assume
\begin{gather}
g:(1,\infty)\to\R \quad \text{is a non-increasing function,} \label{g-decreasing} \\
\int_{\R^n\setminus B_1}\frac{|g(y)|}{1+{|y|}^{n+2s}}\;dy<\infty. \label{g-integrable}
\end{gather}
We denote by
\begin{align}\label{bt}
b_t=\sup\{y\in(1,\infty):g(y)>t\} \quad \text{for }t\in(-\infty,\overline{g}),
\qquad
\overline g=\lim_{y\downarrow 1}g(y).
\end{align}
Let $h:\R^n\to\R$ be the $s$-harmonic extension of $y\to g(|y|)$ in $B_1$, namely
\begin{align}\label{h}
h(x)=\int_{B_1^c}P_s(x,y)\,g(|y|)\;dy 
=\gamma_{n,s}\int_{B_1^c}\bigg(\frac{1-|x|^2}{|y|^2-1}\bigg)^s\frac{g(|y|)}{{|y-x|}^n}\;dy,
\qquad\text{for }x\in B_1.
\end{align}

\begin{proposition}\label{prop:s-harm}
Assume \eqref{g-decreasing}, \eqref{g-integrable}, and that $g$ is non-negative. 
The function $h$ defined as in \eqref{h} is
radial, radially increasing, and subharmonic in $B_1$.
\end{proposition}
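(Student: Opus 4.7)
The plan is to derive a one-dimensional integral representation for the radial profile of $h$, which yields radiality and radial monotonicity at once, and then to establish subharmonicity by a layer-cake reduction to the indicator case, for which the conclusion can be checked by a direct computation.

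Radiality is immediate from the rotation invariance of $P_s$ and of $g(|\cdot|)$, along the lines of the first lemma after \eqref{Q}. For the one-dimensional formula I would set $x=re_1$ with $r\in[0,1)$, pass to polar coordinates $y=\rho\omega$ with $\rho>1$, $\omega\in\partial B_1$, and evaluate the inner spherical integral via the classical identity
\[
\int_{\partial B_1}\frac{d\sigma(\omega)}{|re_1-\rho\omega|^n}=\frac{|S^{n-1}|}{\rho^{n-2}(\rho^2-r^2)},
\]
which is equivalent to the Poisson-kernel normalization on $\partial B_\rho$ (a particular case of Lemma \ref{lem:integral computation}). The successive substitutions $u=\rho^2$, $v=u-1$, $w=v/(1-r^2)$ then lead to
\[
h(re_1)=c_{n,s}\int_0^{\infty}\frac{g\bigl(\sqrt{1+(1-r^2)w}\bigr)}{w^s(w+1)}\,dw,\qquad c_{n,s}=\frac{\sin(\pi s)}{\pi},
\]
with $c_{n,s}$ fixed by the normalization $\int_{B_1^c}P_s(x,y)\,dy=1$. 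Since for each fixed $w>0$ the argument $\sqrt{1+(1-r^2)w}$ is decreasing in $r$ and $g$ is non-increasing, the integrand is non-decreasing in $r$, proving radial monotonicity.

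For subharmonicity I would decompose $g$ via the layer-cake identity $g(\rho)=\int_0^{\overline g}\1_{(1,b_t)}(\rho)\,dt$, so that $h=\int_0^{\overline g}h_{b_t}\,dt$ with $h_R(x):=\int_{B_R\setminus B_1}P_s(x,y)\,dy$; this reduces the problem to subharmonicity of each $h_R$. Using $\int_{B_1^c}P_s(x,y)\,dy=1$, this is equivalent to superharmonicity of $h^R(x):=\int_{B_R^c}P_s(x,y)\,dy=1-h_R(x)$. Applying the one-dimensional formula to $g=\1_{(R,\infty)}$ yields $h^R(x)=\Psi(1-|x|^2)$, where
\[
\Psi(\alpha)=c_{n,s}\int_{(R^2-1)/\alpha}^{\infty}\frac{dw}{w^s(w+1)}.
\]
One differentiation gives $\Psi'(\alpha)=c_{n,s}(R^2-1)^{1-s}\alpha^{s-1}/(R^2-1+\alpha)>0$, and a second differentiation shows that $\Psi''(\alpha)$ has the sign of $(s-1)(R^2-1)+(s-2)\alpha$, which is strictly negative for $s\in(0,1)$.

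Combining these with the chain-rule identity $\Delta u(x)=-2n\Psi'(\alpha)+4|x|^2\Psi''(\alpha)$ (valid for any radial $u(x)=\Psi(1-|x|^2)$ with $\alpha=1-|x|^2$), both contributions to $\Delta h^R$ are non-positive, so $\Delta h^R\leq 0$ in $B_1$; reassembling via the layer cake then transfers subharmonicity back to $h$. No essential obstacle is anticipated: the only real computations are the spherical mean identity and the sign analysis of $\Psi''$, both of which are elementary but require careful bookkeeping.
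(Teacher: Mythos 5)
Your proposal is correct and follows essentially the same route as the paper: the same spherical-mean identity collapses $h$ to a one-dimensional integral in the variable $1-|x|^2$, the same layer-cake decomposition reduces to indicator data, and your sign analysis of $\Psi'$ and $\Psi''$ reproduces the paper's direct computation of $\nabla h$ and $\Delta h$ (your $\Psi$ is, after the substitution $v=1/w$ and complementation via $\int_{B_1^c}P_s(x,y)\,dy=1$, exactly the inner integral in the paper's formula for $h$). The only cosmetic differences are that you get radial monotonicity directly from the monotone integrand rather than by differentiation, and that you pass through $h^R=1-h_R$ for the subharmonicity step; both are correct.
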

\begin{proof}
Starting from the representation formula \eqref{h}, we write
\begin{align*}
h(x)=\gamma_{n,s}\big(1-|x|^2\big)^s\int_1^\infty\frac{\rho^{n-1}\,g(\rho)}{\big(\rho^2-1\big)^s}\int_{\partial B_1}\frac{d\theta}{\big|\rho\theta-x\big|^n}\;d\rho
\end{align*}
and, in view of Lemma \ref{lem:integral computation}, it holds for $x\in B_1$ and $\rho>1$
\begin{align*}
\int_{\partial B_1}\frac{d\theta}{\big|\rho\theta-x\big|^n} &=
\frac{2\pi^{\frac{n-1}{2}}}{\Gamma(\frac{n-1}{2})}\int_{-1}^1\frac{(1-t^2)^{\frac{n-3}{2}}}{(\rho^2+|x|^2-2\rho|x|t)^{\frac{n}2}}\;dt \\
&=
\frac{2^{n-1}\pi^{\frac{n-1}{2}}}{\Gamma(\frac{n-1}{2})}\int_0^1\frac{\tau^{\frac{n-3}{2}}{(1-\tau)}^{\frac{n-3}{2}}}{\big((\rho+|x|)^2-4\rho|x|\tau\big)^{\frac{n}2}}\;d\tau \\
&=
\frac{2^{n-1}\pi^{\frac{n-1}{2}}}{\Gamma(\frac{n-1}{2})}\frac{\Gamma(\frac{n-1}2)}{\Gamma(n-1)}
\big(\rho+|x|\big)^{-n}\hf\bigg(\frac{n}2,\frac{n-1}2;n-1\Big|\frac{4\rho|x|}{(\rho+|x|)^2}\bigg) \\
&=
\frac{2^{n-1}\pi^{\frac{n-1}{2}}}{}\frac{\Gamma(\frac{n-1}2)}{\Gamma(n-1)}
\rho^{-n}\hf\bigg(\frac{n}2,1;\frac{n}2\Big|\frac{|x|^2}{\rho^2}\bigg)\\
&=
2^{n-1}\pi^{\frac{n-1}{2}}\frac{\Gamma(\frac{n-1}2)}{\Gamma(n-1)}
\frac{\rho^{2-n}}{\rho^2-|x|^2}
\end{align*}
where we have used \eqref{hyper def}, \eqref{hyper identity 3}, and \eqref{hyper identity 1} in this order.
Using the \textit{layer-cake representation} for $g$ 
\begin{align*}
g(\rho)=\int_0^{\overline{g}}\1_{(1,b_t)}(\rho)\;dt,
\qquad \rho>1,
\end{align*}
with $b_t$ and $\overline{g}$ defined as in \eqref{bt}, we write for $x\in B_1$
\begin{align*}
h(x) &=
\gamma_{1,s}\big(1-|x|^2\big)^s\int_1^\infty\frac{2\rho\,g(\rho)}{\big(\rho^2-1\big)^s(\rho^2-|x|^2)}\;d\rho  \\
&=\gamma_{1,s}\big(1-|x|^2\big)^s\int_0^{\overline g}\int_1^{b_t}\frac{2\rho}{\big(\rho^2-1\big)^s(\rho^2-|x|^2)}\;dy\;dt 
\end{align*}
where, for any $b\geq 1$,
\begin{multline*}
\int_1^b\frac{2\rho}{\big(\rho^2-1\big)^s(\rho^2-|x|^2)}\;d\rho
= \int_1^{b^2}\frac{dz}{{(z-1)}^s(z-|x|^2)} 
= \int_0^{b^2-1}\frac{dz}{z^s(z+1-|x|^2)} = \\
= \big(1-|x|^2\big)^{-s}\int_0^{\frac{b^2-1}{1-|x|^2}}\frac{dw}{w^s(w+1)} 
= \big(1-|x|^2\big)^{-s}\int_{\frac{1-|x|^2}{b^2-1}}^\infty\frac{v^{s-1}}{v+1}\;dv.
\end{multline*}
Therefore
\begin{align*}
h(x)=\gamma_{1,s}\int_0^{\overline g}\int_{\frac{1-|x|^2}{b_t^2-1}}^\infty\frac{v^{s-1}}{v+1}\;dv\;dt \qquad \text{for }x\in B_1.
\end{align*}
As a consequence, for any $x\in B_1$,
\begin{align*}
\nabla h(x) &= -\gamma_{1,s}\int_0^{\overline g}\bigg(\frac{1-|x|^2}{b_t^2-1}\bigg)^{s-1}\frac{1}{\frac{1-|x|^2}{b_t^2-1}+1}\,\frac{-2x}{b_t^2-1}\;dt =
2\gamma_{1,s}\,x\big(1-|x|^2\big)^{s-1}\int_0^{\overline g}\frac{\big(b_t^2-1\big)^{1-s}}{b_t^2-|x|^2}\;dt.
\end{align*}
This proves the radial monotonicity. Moreover, for any $x\in B_1$,
\begin{align*}
-\Delta h(x) &=
-2\gamma_{1,s}\;\mathrm{div}\bigg(x\big(1-|x|^2\big)^{s-1}\int_0^{\overline g}\frac{\big(b_t^2-1\big)^{1-s}}{b_t^2-|x|^2}\;dt\bigg) \\
&=
-2n\gamma_{1,s}\,\big(1-|x|^2\big)^{s-1}\int_0^{\overline g}\frac{\big(b_t^2-1\big)^{1-s}}{b_t^2-|x|^2}\;dt 
-4(1-s)\gamma_{1,s}\,|x|^2\big(1-|x|^2\big)^{s-2}\int_0^{\overline g}\frac{\big(b_t^2-1\big)^{1-s}}{b_t^2-|x|^2}\;dt \\
& \qquad -4\gamma_{1,s}\,|x|^2\big(1-|x|^2\big)^{s-1}\int_0^{\overline g}\frac{\big(b_t^2-1\big)^{1-s}}{\big(b_t^2-|x|^2\big)^2}\;dt
\end{align*}
is (strictly) negative for $x\in B_1$. 
\end{proof}
\begin{remark}\label{rmk:increasing}
Analogue calculations can be performed when $g$ is non-negative and non-decreasing instead.
This would lead to a radial, radially decreasing, and super-harmonic $s$-harmonic extension.
\end{remark}
\begin{remark}\label{rmk:sign-changing} The non-negativity assumption in Proposition \ref{prop:s-harm} can be dropped: 
if we split $g=g^+-g^-$ we can directly apply Proposition \ref{prop:s-harm} to $g^+$ and Remark \ref{rmk:increasing} to $g^-$.
\end{remark}

\begin{bibdiv}
\begin{biblist}

\bib{MR3393247}{article}{
   author={Abatangelo, N.},
   title={Large $s$-harmonic functions and boundary blow-up solutions for
   the fractional Laplacian},
   journal={Discrete Contin. Dyn. Syst.},
   volume={35},
   date={2015},
   number={12},
   pages={5555--5607},
}

\bib{av}{article}{
   author={Abatangelo, N.},
   author={Valdinoci, E.},
   title={Getting acquainted with the fractional Laplacian},
   conference={
      title={Contemporary research in elliptic PDEs and related topics},
   },
   book={
      series={Springer INdAM Ser.},
      volume={33},
      publisher={Springer, Cham},
   },
   date={2019},
   pages={1--105},
}

\bib{abramowitz}{book}{
   author={Abramowitz, M.},
   author={Stegun, I.A.},
   title={Handbook of mathematical functions with formulas, graphs, and
   mathematical tables},
   series={National Bureau of Standards Applied Mathematics Series},
   volume={55},
   publisher={For sale by the Superintendent of Documents, U.S. Government
   Printing Office, Washington, D.C.},
   date={1964},
   pages={xiv+1046},
}

\bib{MR3306696}{article}{
   author={Ba\~{n}uelos, R.},
   author={DeBlassie, D.},
   title={On the first eigenfunction of the symmetric stable process in a
   bounded Lipschitz domain},
   journal={Potential Anal.},
   volume={42},
   date={2015},
   number={2},
   pages={573--583},
}

\bib{MR2056835}{article}{
   author={Ba\~{n}uelos, R.},
   author={Kulczycki, T.},
   title={The Cauchy process and the Steklov problem},
   journal={J. Funct. Anal.},
   volume={211},
   date={2004},
   number={2},
   pages={355--423},
}

\bib{MR2217951}{article}{
   author={Ba\~{n}uelos, R.},
   author={Kulczycki, T.},
   author={M\'{e}ndez-Hern\'{a}ndez, P.J.},
   title={On the shape of the ground state eigenfunction for stable
   processes},
   journal={Potential Anal.},
   volume={24},
   date={2006},
   number={3},
   pages={205--221},
}

\bib{MR3461641}{article}{
   author={Bucur, C.},
   title={Some observations on the Green function for the ball in the
   fractional Laplace framework},
   journal={Commun. Pure Appl. Anal.},
   volume={15},
   date={2016},
   number={2},
   pages={657--699},
}

\bib{bucur-valdinoci}{book}{
   author={Bucur, C.},
   author={Valdinoci, E.},
   title={Nonlocal diffusion and applications},
   series={Lecture Notes of the Unione Matematica Italiana},
   volume={20},
   publisher={Springer, [Cham]; Unione Matematica Italiana, Bologna},
   date={2016},
   pages={xii+155},
}

\bib{MR2158176}{article}{
   author={Chen, Z.-Q.},
   author={Song, R.},
   title={Two-sided eigenvalue estimates for subordinate processes in
   domains},
   journal={J. Funct. Anal.},
   volume={226},
   date={2005},
   number={1},
   pages={90--113},
}

\bib{hitchhiker}{article}{
   author={Di Nezza, E.},
   author={Palatucci, G.},
   author={Valdinoci, E.},
   title={Hitchhiker's guide to the fractional Sobolev spaces},
   journal={Bull. Sci. Math.},
   volume={136},
   date={2012},
   number={5},
   pages={521--573},
}

\bib{MR3656279}{article}{
   author={Dyda, B.},
   author={Kuznetsov, A.},
   author={Kwa\'{s}nicki, M.},
   title={Eigenvalues of the fractional Laplace operator in the unit ball},
   journal={J. Lond. Math. Soc. (2)},
   volume={95},
   date={2017},
   number={2},
   pages={500--518},
}

\bib{garofalo}{article}{
   author={Garofalo, N.},
   title={Fractional thoughts},
   conference={
      title={New developments in the analysis of nonlocal operators},
   },
   book={
      series={Contemp. Math.},
      volume={723},
      publisher={Amer. Math. Soc., [Providence], RI},
   },
   date={[2019] \copyright 2019},
   pages={1--135},
}

\bib{ks}{article}{
	author={Ka\ss mann, M.},
	author={Silvestre, L.},
	title={On the superharmonicity of the first eigenfunction of the
fractional Laplacian for certain exponents},
	note={Available at \href{https://www.math.uchicago.edu/~luis/preprints/cfe.pdf}{https://www.math.uchicago.edu/\textasciitilde luis/preprints/cfe.pdf}},
	date={August 2014},
}

\bib{MR3635356}{article}{
   author={Kulczycki, T.},
   title={On concavity of solutions of the Dirichlet problem for the
   equation $(-\Delta)^{1/2}\varphi=1$ in convex planar regions},
   journal={J. Eur. Math. Soc. (JEMS)},
   volume={19},
   date={2017},
   number={5},
   pages={1361--1420},
}

\bib{MR0350027}{book}{
   author={Landkof, N.S.},
   title={Foundations of modern potential theory},
   series={Die Grundlehren der mathematischen Wissenschaften, Band 180},
   note={Translated from the Russian by A. P. Doohovskoy},
   publisher={Springer-Verlag, New York-Heidelberg},
   date={1972},
   pages={x+424},
}

\bib{MR3246044}{article}{
   author={Musina, Roberta},
   author={Nazarov, Alexander I.},
   title={On fractional Laplacians},
   journal={Comm. Partial Differential Equations},
   volume={39},
   date={2014},
   number={9},
   pages={1780--1790},
}

\bib{MR3233760}{article}{
   author={Servadei, R.},
   author={Valdinoci, E.},
   title={On the spectrum of two different fractional operators},
   journal={Proc. Roy. Soc. Edinburgh Sect. A},
   volume={144},
   date={2014},
   number={4},
   pages={831--855},
}

\bib{MR2270163}{article}{
   author={Silvestre, L.},
   title={Regularity of the obstacle problem for a fractional power of the
   Laplace operator},
   journal={Comm. Pure Appl. Math.},
   volume={60},
   date={2007},
   number={1},
   pages={67--112},
}

\end{biblist}
\end{bibdiv}

\end{document}